\newtheorem{thm}{Theorem}[section]
\newtheorem{cor}[thm]{Corollary}
\newtheorem{lem}[thm]{Lemma}
\newtheorem{prop}[thm]{Proposition}
\theoremstyle{definition}
\newtheorem{defn}[thm]{Definition}
\theoremstyle{remark}
\newtheorem{rem}[thm]{Remark}
\numberwithin{equation}{section}
\numberwithin{thm}{section}
\newcommand{\eps}{\varepsilon}
\newcommand{\lsm}{\lesssim}
\newcommand{\R}{{\mathbb{R}}}
\newcommand{\ed}{\end {document}}
\newcounter{smalllist}
\title[Dynamics for energy critical wave equation]{Dynamics for the energy critical nonlinear
wave equation in high dimensions}
\author{Dong Li}
\address{Institute for Advanced Study, 1st Einstein Drive, Princeton NJ, 08544}
\author{Xiaoyi Zhang}
\address{Institute for Advanced Study, 1st Einstein Drive, Princeton NJ, 08544.
Academy of Mathematics and System Sciences, Beijing 100080}
\begin{document}
\maketitle
\begin{abstract}
In \cite{duck-merle:wave}, T. Duyckaerts and F. Merle studied the
variational structure near the ground state solution $W$ of the
energy critical wave equation and classified the solutions with the
threshold energy $E(W,0)$ in dimensions $d=3,4,5$. In this paper, we
extend the results to all dimensions $d\ge 6$. The main issue in
high dimensions is the non-Lipschitz continuity of the nonlinearity
which we get around by making full use of the decay property of $W$.
\end{abstract}

\section{introduction}
We consider the Cauchy problem of the focusing energy critical
nonlinear wave equation:
\begin{equation}\label{nlw}
\begin{cases}
u_{tt}-\Delta u-|u|^{\frac 4{d-2}}u=0,  \\
u(0,x)=u_0(x),\ \partial_t u(0,x)=u_1(x).
\end{cases}
\end{equation}
where $u(t,x)$ is a real function on $\R\times\R^d$, $d\ge 3$ and
$u_0\in \dot H_x^1(\R^d)$, $u_1\in L_x^2(\R^d)$. The name "energy
critical" refers to the fact that the scaling
\begin{equation}\label{scaling}
u(t,x)\to
u_{\lambda}(t,x)=\lambda^{-\frac{d-2}2}u(\lambda^{-1}t,\lambda^{-1}x)
\end{equation}
leaves both the equation and the energy invariant. Here, the energy
is defined by
\begin{equation}\label{energy}
E(u(t),\partial_t u(t))=\frac 12\|\partial_t u(t)\|_2^2+\frac
12\|\nabla u(t)\|_2^2
-\frac{d-2}{2d}\|u(t)\|_{\frac{2d}{d-2}}^{\frac{2d}{d-2}},
\end{equation}
and is conserved in time.

From the classical local theory (cf. \cite{gsv:wave,
kapitanski, ls95, 
sha-stru, sha:book, sogge}), for any
$(u_0,u_1)\in \dot H^1_x\times L_x^2$, there exists a unique
maximal-lifespan solution of \eqref{nlw} on a time interval
$(-T_-,T_+)$ such that the local \emph{scattering size}
\begin{align*}
S_I(u):=\|u\|_{L_{t,x}^{\frac{2(d+1)}{d-2}}(I\times\R^d)}<\infty, \
\end{align*}
for any compact interval $I\subset(-T_-,T_+)$. If
$S_{[0,T_+)}(u)=\infty$, we say $u$ blows up forward in time.
Likewise $u$ blows up backward in time if $S_{(-T_+,0]}(u)=\infty$.
We also recall the fact that the non-blowup of $u$ in one direction
implies scattering in the space $\dot H_x^1\times L_x^2$ in that
direction.

For the defocusing energy critical NLW, the global wellposedness and
scattering for all finite energy solutions was established in
\cite{grillakis:3d, grillakis:hd, sha-stru, sha-stru:anna, sha:book,
merlekenig:wave}. In the focusing case, depending on the size of the
kinetic energy of the initial data, both scattering and blowup may
occur. The threshold between blowup and scattering is believed to be
determined by the ground state solution of the equation \eqref{nlw}:

$$
W(x)=\left(1+\frac{|x|^2}{d(d-2)}\right)^{-\frac{d-2}2},
$$
which solves the static nonlinear wave equation
\begin{align*}
 \Delta
W+W^{\frac{d+2}{d-2}}=0.
\end{align*}
This was verified by Kenig-Merle \cite{merlekenig:wave} in
dimensions $d=3,4,5$.

\begin{thm}[Global wellposedness and scattering \cite{merlekenig:wave}]
\label{gwp}  Let  $d= 3,4,5$ and $(u_0, u_1)\in \dot H_x^1\times
L_x^2$. Assume that $E(u_0,u_1)<E(W,0)$. Let $u=u(t,x)$ be the
maximal-lifespan solution of \eqref{nlw} on $I\times \R^d$.

i) If $\|\nabla u_0\|_2<\|\nabla W\|_2$, then $I=\R$ and the
scattering size of $u$ is finite,
\begin{align*}
S_I(u)=\|u\|_{L_{t,x}^{\frac{2(d+1)}{d-2}}(I\times\R^d)}<\infty.
\end{align*}

ii) If $\|\nabla u_0\|_2>\|\nabla W\|_2$, then $u$ blows up at
finite time in both time directions, i.e, $|I|<\infty$.

\end{thm}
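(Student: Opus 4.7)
\emph{Variational dichotomy.} The starting point is the sharp Sobolev inequality, for which (up to the symmetries of the problem) $W$ is the unique extremizer. Combined with the conservation of energy, this yields the following trapping: under $E(u_0,u_1)<E(W,0)$, the sign of $\|\nabla u_0\|_2-\|\nabla W\|_2$ is preserved throughout the maximal lifespan. Indeed, the function $f(s)=\tfrac12 s^2-\tfrac{d-2}{2d}C_d^{2d/(d-2)}s^{2d/(d-2)}$ has its unique maximum value $E(W,0)$ at $s=\|\nabla W\|_2$; the inequality $E(u,\partial_t u)\ge f(\|\nabla u\|_2)$ combined with a continuity argument along the flow gives the invariance. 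A quantitative coercivity gap follows: if $E(u_0,u_1)\le(1-\eta)E(W,0)$ and $\|\nabla u_0\|_2<\|\nabla W\|_2$, then $\|\nabla u\|_2^2-\|u\|_{2d/(d-2)}^{2d/(d-2)}\ge \delta(\eta)\|\nabla u\|_2^2$ for all $t$ in the lifespan (with the reversed sign on the other side of the threshold).

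\emph{Part (i): Scattering.} I would adopt the Kenig--Merle concentration-compactness/rigidity scheme. Small-data theory gives scattering below some threshold energy, so if (i) fails the infimum $E_c$ of energies of non-scattering solutions satisfying $\|\nabla u_0\|_2<\|\nabla W\|_2$ lies in $(0,E(W,0))$. A Bahouri--G\'erard-type linear profile decomposition for the free wave flow, together with the long-time perturbation lemma, produces a minimal critical element $u_c$ at energy $E_c$ whose trajectory in $\dot H^1_x\times L^2_x$ is precompact modulo the scaling-translation symmetries $(\lambda(t),x(t))$ and which fails to scatter in at least one time direction. One then eliminates $u_c$ by a rigidity theorem distinguishing two scenarios: the self-similar case $\lambda(t)\to\infty$ at the final time, which is excluded by Strichartz-type energy estimates inside the backward light cone combined with the coercivity gap; and the soliton-like case (bounded scale), which is excluded by a localized virial/Morawetz identity whose leading term is controlled from below by the same gap. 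Either way $u_c\equiv 0$, contradicting $E_c>0$.

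\emph{Part (ii): Blowup, and the main obstacle.} Set $y_R(t)=\int_{\R^d}\varphi_R(x)|u(t,x)|^2\,dx$ with $\varphi_R$ a smooth radial cutoff at scale $R$ (truncation is unavoidable since $u$ need not be $L^2$, and finite speed of propagation will control the truncation errors). Differentiating twice yields $y_R''(t)=2\|\partial_t u\|_2^2+2\int(|u|^{2d/(d-2)}-|\nabla u|^2)dx+O_R(1)$, and the preserved inequality $\|\nabla u\|_2>\|\nabla W\|_2$ together with the quantitative gap from Step~1 forces the bulk to be bounded below by a fixed positive constant. A Glassey/Levine-type convexity argument then produces an ODE inequality for $y_R$ incompatible with global existence, yielding finite-time blowup in both directions. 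The principal obstacle in the whole proof is the rigidity step in Part~(i): establishing precompactness of the critical trajectory modulo symmetries, and especially the two separate analyses for the self-similar and soliton-like profiles, are the most delicate parts and rely on sharp linear wave dispersive bounds combined with precise localized virial estimates.
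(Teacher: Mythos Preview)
The paper does not give its own proof of this statement: Theorem~\ref{gwp} is quoted from Kenig--Merle \cite{merlekenig:wave} and is used as background for the threshold-energy analysis that is the paper's actual contribution. There is therefore nothing in the paper to compare your proposal against beyond the citation itself.

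That said, your outline is a faithful high-level summary of the Kenig--Merle argument: the variational trapping via the sharp Sobolev inequality, the concentration-compactness/rigidity road map (profile decomposition, minimal non-scattering element with precompact trajectory modulo symmetries, and elimination via self-similar and virial/Morawetz analyses), and the convexity/Levine-type obstruction for part~(ii) are exactly the ingredients of \cite{merlekenig:wave}. One small caution on part~(ii): the localized quantity $y_R$ you introduce is the right object, but the blowup conclusion is obtained in \cite{merlekenig:wave} not from a naive ODE inequality for $y_R$ alone but by combining the convexity with finite speed of propagation and a careful choice of $R$ depending on the putative lifespan, so that the truncation errors are genuinely negligible on the relevant time scale; your parenthetical ``finite speed of propagation will control the truncation errors'' is where the actual work lies.
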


\begin{rem} In \cite{merlekenig:wave}, Theorem \ref{gwp} was proved in
dimensions $d=3,4,5$. To generalize it to higher dimensions, one
needs a stronger stability result which has been worked out for
the energy critical NLS \cite{vz:unpublished}.
After some changes, the stronger stability theory for NLS still
holds for NLW, and we will address this problem elsewhere
\cite{lz:prep}. We stress that Theorem 1.4 below depends crucially
on this generalization.
\end{rem}

Theorem \ref{gwp} confirmed that the threshold between blowup and
scattering is given by the ground state $W$. Our purpose of this
paper is not to investigate the global wellposedness and scattering
theory below the threshold. Instead, we aim to continue the study in
\cite{duck-merle:wave} on what will happen if the solution has the
threshold energy $E(W,0)$. In that paper, T. Duyckaerts and F. Merle
carried out a very detailed study of the dynamical structure around
the ground state solution $W$. They were able to give the
characterization of solutions with the threshold energy in
dimensions $d=3,4,5$.

In this paper, we aim to extend the results in
\cite{duck-merle:wave} to all dimensions $d\ge 6$. Although the
whole framework designed for low dimensions can also be used for the
high dimensional setting, there are a couple of places where the
arguments break down in high dimensions. Roughly speaking, this was mainly caused by
the fact that the nonlinearity is no longer Lipschitz continuous or superlinear in
Strichartz space $\dot S^1$ (see Section 2 for the definition).
In  the whole proof, there are mainly two places where the Lipschitz continuity
and superlinearity is heavily needed. The first is in the construction
of threshold solutions $W^{\pm}$ using a perturbed equation where Lipschitz continuity
is used for a contraction argument. The second is in showing rigidity properties of
$W^{\pm}$ where we need to show that the nonlinearity is superlinear with respect
to perturbations. The superlinearity is a crucial property needed for a bootstrap
argument which were used to show that solutions exponentially close to $W^{\pm}$ must
coincide with $W^{\pm}$ up to symmetries. To get around this problem, we will employ
a similar technique which we used for the corresponding problem of energy critical NLS \cite{lz:schrodinger}.
When constructing the threshold solution, instead of using the standard
Strichartz space, we will use the \emph{weighted Sobolev space}
$H^{m,m}$ (see next Section for the definition). The weighted space
$H^{m,m}$ turns out to be a natural space in which the nonlinearity of the
perturbed equation can be proved to be Lipschitz continuous. To show the
rigidity of the threshold solutions, we will show the
perturbed nonlinearity of the form $R(v+w^a) -R(w^a)$ \footnote{Here
$w^a$ is the difference between the threshold solution and the ground state $W$, see Lemma \ref{gain-decay}.}
has better decay than the perturbation $v$ which already has certain exponential decay.
By proving that the difference $w^a$ is in $H^{m,m}$, we are able to transform the
perturbed nonlinearity into a form which can be treated by using the dyadic
decomposition trick from \cite{duck-merle:wave}. The rigidity of the threshold
solutions then follows after several boostrap steps.


\vspace{0.2cm}

In all, the material in this paper allows us to extend the argument
in \cite{duck-merle:wave} to all dimensions $d\ge 6$. This is the
following

\begin{thm}\label{exist-w} Let $d\ge 6$.
There exists a spherically symmetric solution $W^{\pm}$ of
\eqref{nlw} defined on the maximal-lifespan
$(-T_-(W^{\pm}),T_+(W^{\pm}))$ with initial data
$(W_0^{\pm},W_1^{\pm})\in \dot H_x^1\times L_x^2$ such that
\begin{align}
E(W,0)=E(W_0^{+},W_1^+)=E(W_0^-,W_1^-),\label{equal energy}\\
T_+(W^-)=T_+(W^+)=+\infty,\ \ \mbox{and } \lim_{t\to +\infty}
\|W^{\pm}(t)-W\|_{\dot H_x^1}=0,\label{converge}\\
\|\nabla W^-\|_2<\|\nabla W\|_2,\ T_-(W^-)=+\infty,\
S_{(-\infty,0) }(W^-)<\infty,\label{scatter back}\\
\|\nabla W^+\|_2>\|\nabla W\|_2, \ T_-(W^+)<\infty.\label{blowup
forw}
\end{align}
\end{thm}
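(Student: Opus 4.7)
The plan is to follow the linearization strategy of Duyckaerts-Merle. Writing $u = W + h$, the perturbation $h$ solves
\[
h_{tt} + \mathcal{L} h = R(h), \qquad \mathcal{L} := -\Delta - \tfrac{d+2}{d-2}W^{\frac{4}{d-2}},
\]
with $R(h) = O(h^2)$ near $h = 0$. A standard Sturm-Liouville analysis on the radial sector shows that $\mathcal{L}$ admits a unique negative eigenvalue $-e^2$, $e > 0$, associated to a smooth, positive, radial and exponentially decaying eigenfunction $\mathcal{Y}$. The pair $e^{\pm e t}\mathcal{Y}$ of unstable/stable modes of the linearized flow is what drives the construction of the two threshold solutions.

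First I would build high-order approximate solutions. For each $a \in \{+1,-1\}$, postulate a formal series
\[
h^a_\infty(t,x) = \sum_{k\ge 1} a^k e^{-k e t}\,\Psi_k(x),
\]
substitute into the equation for $h$, and determine $\Psi_k$ recursively from $(\mathcal{L} - k^2 e^2)\Psi_k = F_k(\Psi_1,\dots,\Psi_{k-1})$. Since $-e^2$ is the only point eigenvalue of $\mathcal{L}$, the operators $\mathcal{L} - k^2 e^2$ are invertible on a suitable radial weighted space for $k\ge 2$, and each $\Psi_k$ inherits polynomial decay from $W$ and $\mathcal{Y}$. Truncating at level $K$ produces an approximate solution $W^a_K := W + \sum_{k=1}^K a^k e^{-ket}\Psi_k$ whose residual in \eqref{nlw} is $O(e^{-(K+1)et})$.

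The analytic core is to upgrade $W^a_K$ to an exact solution $W^a$ by solving the residual equation for $v := u - W^a_K$ on $[T_0,\infty)$ via a contraction mapping with trivial data at $t = +\infty$. Here lies the central obstacle for $d \ge 6$: the exponent $(d+2)/(d-2)$ is strictly less than $2$, so $R(W^a_K + v_1) - R(W^a_K + v_2)$ is only H\"older in $v$ in pure Strichartz norms and the naive contraction fails. Following the analogous treatment of the energy-critical NLS in \cite{lz:schrodinger}, I would set up the contraction in the weighted Sobolev space $H^{m,m}$ defined in Section 2. Because $W$ and every $\Psi_k$ decay at least like $|x|^{-(d-2)}$, linearizing the nonlinearity around $W^a_K$ produces a coefficient of order $\langle x \rangle^{-4}$, and the spatial weights in $H^{m,m}$ convert this polynomial decay into the missing H\"older exponent, restoring genuine Lipschitz control on a small ball. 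Combined with weighted Strichartz estimates for the free wave group, this closes the contraction on $[T_0,\infty)$ for $T_0$ large, producing $W^a$ with $\|(W^a - W^a_K)(t)\|_{\dot H^1_x \times L^2_x} \lesssim e^{-(K+1)et}$. I expect this step to be the main technical difficulty.

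It remains to verify the listed properties. Convergence $W^a(t) \to W$ in $\dot H^1_x$ with $\partial_t W^a(t)\to 0$ is immediate from the approximation, and continuity of $E$ yields \eqref{equal energy}. Integration by parts against $-\Delta W = W^{(d+2)/(d-2)}$ gives $\int \nabla W \cdot \nabla \mathcal{Y}\,dx = \int W^{(d+2)/(d-2)} \mathcal{Y}\,dx > 0$ since $W$ and $\mathcal{Y}$ are positive, so
\[
\|\nabla W^a(t)\|_2^2 - \|\nabla W\|_2^2 = 2 a e^{-et} \int \nabla W \cdot \nabla \mathcal{Y}\,dx + O(e^{-2et})
\]
inherits the sign of $a$ for $t$ large. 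The variational characterization of $W$ on the energy level $E(W,0)$ prevents $\|\nabla W^a(t)\|_2$ from crossing $\|\nabla W\|_2$ in finite time without forcing $W^a \equiv W$, so \eqref{scatter back} and \eqref{blowup forw} hold on the full interval of existence. Applying Theorem \ref{gwp} (extended to $d \ge 6$ via the stability improvement announced in the remark following Theorem \ref{gwp}) then yields backward global existence with scattering for $W^-$ and finite-time backward blowup for $W^+$. Spherical symmetry is preserved because $W$, $\mathcal{Y}$ and every $\Psi_k$ are radial, and the fixed point map leaves the radial sector invariant.
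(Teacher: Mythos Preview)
Your construction of the approximate profiles and the $H^{m,m}$ contraction is essentially the paper's approach, with two differences worth flagging. First, the paper does not use weighted Strichartz estimates to close the fixed point; it uses only the energy method in $H^{m,m}$ (Lemma~\ref{lin-hmm} and Lemma~\ref{sigma-hmm}), which yields growth $e^{C|t|}$ for the wave propagator in $H^{m,m}$, and then chooses $k$ large so that the decay rate $(k+\tfrac12)e_0$ beats this constant $C$. Second, the Lipschitz mechanism is more concrete than ``weights convert polynomial decay into the missing H\"older exponent'': the $H^{m,m}$ embedding (Lemma~\ref{lem_embed_208}) forces $|h(t,x)| \lesssim \langle x\rangle^{-(d-2)} e^{-\alpha t} \le \tfrac14 W(x)$ pointwise, and the Schwartz bounds on $v_k$ give $|v_k| \le \tfrac12 W$, so one may Taylor expand the real-analytic function $s\mapsto |1+s|^{p_c-1}(1+s)$ around $s=0$ and estimate the convergent series term by term in $H^{m,m}$ via Lemmas~\ref{bilinear} and~\ref{multi}. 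Note also that the $\Phi_j$ are Schwartz (Lemma~\ref{lemma:wka}), not merely polynomially decaying; this is what makes $\|v_k\|_{H^{m,m}}$ finite for arbitrary $m$.

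There is, however, a genuine gap in your treatment of the backward-in-time behavior. Theorem~\ref{gwp} is stated for strict sub-threshold energy $E(u_0,u_1) < E(W,0)$, whereas $W^{\pm}$ sit exactly at the threshold, so it does not apply. Indeed $W^+$ itself shows that part (ii) cannot extend verbatim to the threshold: it has $\|\nabla W^+_0\|_2 > \|\nabla W\|_2$ yet is global forward. The paper (Remark~\ref{rem_revadd}) instead invokes the threshold-specific arguments of Duyckaerts--Merle: finite-time backward blowup of $W^+$ follows from a direct virial/convexity computation at the critical energy level, and backward scattering of $W^-$ is exactly the content of Proposition~\ref{prop:exp}(a), which says that a threshold solution with $\|\nabla u_0\|_2 < \|\nabla W\|_2$ that fails to scatter forward (as $W^-$ does, since it converges to $W$) must scatter backward. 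You should replace the appeal to Theorem~\ref{gwp} by these arguments.
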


Now we classify the solutions with the threshold energy. Since the
equation is invariant under several symmetries, we can determine the
solution only modulo these symmetries. Let $u(t,x)$, $v(t,x)$ be two
spacetime functions. When we say $u=v$ up to symmetries of the
equation, we mean there exist $t_0\in \R$, $x_0\in \R^d$,
$\lambda_0>0$, $c_0, c_1\in \{+1,-1\}$ such that
$$
u(t,x)=\frac{c_0}{\lambda_0^{\frac{d-2}2}} v(\frac{t_0+ c_1
t}{\lambda_0},\frac{x+x_0}{\lambda_0}).
$$
With this convention we have

\begin{thm}\label{u is w}(Dynamical classification at the critical
level). Let $d\ge 6$. Let $(u_0,u_1)\in \dot H^1_x\times L_x^2$ such
that
\begin{align}\label{w-energy}
E(u_0,u_1)=E(W,0).
\end{align}
Let $u$ be the solution of \eqref{nlw} with initial condition
$(u_0,u_1)$ and $I$ be its maximal-lifespan. Then we have the following

(a) If $\|\nabla u_0\|_2<\|\nabla W\|_2$, then $I=\R$. Moreover,
either $u=W^-$ up to symmetries of the equation, or
$S_{\R}(u)<\infty.$

(b) If $\|\nabla u_0\|_2=\|\nabla W\|_2$, then $u=W$ up to
symmetries of the equation.

(c) If $\|\nabla u_0\|_2>\|\nabla W\|_2$ and $u_0 \in L^2$, then $u=W^+$ up to
symmetries of the equation or $I$ is finite.
\end{thm}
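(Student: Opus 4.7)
The plan is to follow the concentration-compactness/rigidity framework of \cite{duck-merle:wave}, using Theorem \ref{gwp} (available for all $d\ge 6$ thanks to the stability result \cite{lz:prep}) together with the threshold solutions $W^{\pm}$ from Theorem \ref{exist-w}, and closing the rigidity step by the high-dimensional techniques indicated in the introduction.

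Case (b) is purely variational. The Pohozaev identity applied to $\Delta W+W^{(d+2)/(d-2)}=0$ gives $\|\nabla W\|_2^2=\|W\|_{2d/(d-2)}^{2d/(d-2)}$, hence $E(W,0)=\tfrac{1}{d}\|\nabla W\|_2^2$. Combining $\|\nabla u_0\|_2=\|\nabla W\|_2$ with $E(u_0,u_1)=E(W,0)$ then forces
\[
\tfrac12\|u_1\|_2^2=\tfrac{d-2}{2d}\bigl(\|u_0\|_{2d/(d-2)}^{2d/(d-2)}-\|W\|_{2d/(d-2)}^{2d/(d-2)}\bigr).
\]
The right-hand side is non-positive by the sharp Sobolev inequality (since $\|\nabla u_0\|_2=\|\nabla W\|_2$ and $W$ saturates the inequality), while the left-hand side is non-negative. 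Hence $u_1=0$ and $u_0$ saturates sharp Sobolev, so by the Aubin--Talenti classification of extremizers $u_0=W$ up to symmetries; thus $u=W$ up to symmetries.

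For (a) and (c), I argue by contradiction: in (a), assume $S_\R(u)=\infty$ and $u\ne W^-$ modulo symmetries; in (c), assume $|I|=\infty$ and $u\ne W^+$ modulo symmetries. A Kenig--Merle profile decomposition, combined with the sub-threshold scattering/blowup from Theorem \ref{gwp}, extracts a minimal critical element that is almost periodic modulo the symmetries of \eqref{nlw}. The variational coercivity of $E$ near the constraint $\|\nabla\cdot\|_2=\|\nabla W\|_2$ forces this critical element to approach the $\dot H^1_x$-orbit of $W$; since the energy gap vanishes, a standard sharpening yields a sequence $t_n\to+\infty$ along which $(u(t_n),\partial_t u(t_n))\to(W,0)$ in $\dot H^1_x\times L^2_x$ modulo symmetries. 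Writing $u=W+v$ after modulating away the symmetry directions and linearizing, the operator $L=-\Delta-\tfrac{d+2}{d-2}W^{4/(d-2)}$ has exactly one simple negative eigenvalue $-e_0^2$. Projecting the equation for $v$ onto the resulting unstable and stable modes and running the standard ODE bootstrap against the spectral gap yields the exponential decay $\|(v(t),\partial_t v(t))\|_{\dot H^1_x\times L^2_x}\lsm e^{-e_0 t}$ for $t$ large.

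The last step is rigidity, which is the main obstacle in high dimensions. One must show that any solution converging exponentially to $W$ coincides with $W^+$ or $W^-$ up to symmetries, the sign being determined by whether $\|\nabla u_0\|_2$ exceeds or falls below $\|\nabla W\|_2$ in light of Theorem \ref{exist-w}. The Duyckaerts--Merle improvement-of-decay bootstrap is driven by the pointwise inequality $|F(W+v)-F(W)-F'(W)v|\lsm |v|^2 W^{(6-d)/(d-2)}$, which loses its meaning for $d\ge 6$ since the nonlinearity $F(u)=|u|^{4/(d-2)}u$ is no longer $C^2$. Following \cite{lz:schrodinger}, I would first establish that $w^a:=W^{\pm}-W$ lies in the weighted Sobolev space $H^{m,m}$ so that it inherits the fast spatial decay of $W$; then the perturbed nonlinearity $R(v+w^a)-R(w^a)$ can be estimated via the dyadic decomposition trick of \cite{duck-merle:wave}, producing an inhomogeneity with strictly better decay than the perturbation $v$ itself. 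A finite number of bootstrap steps then forces $v\equiv 0$, so $u$ coincides with $W^{\pm}$ up to symmetries, contradicting the assumption and completing (a) and (c).
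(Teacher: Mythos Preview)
Your proposal is correct and follows essentially the same route as the paper: part (b) via Aubin--Talenti, parts (a) and (c) by first obtaining exponential convergence to $W$ modulo symmetries and then invoking the high-dimensional rigidity argument (weighted $H^{m,m}$ control of $w^a=W^{\pm}-W$ plus the dyadic decomposition trick) to force $u=W^{\pm}$. The only organizational difference is that the paper does not re-derive the exponential-convergence step you sketch (profile decomposition, almost periodicity, modulation/spectral projection); it imports that statement wholesale as Proposition~\ref{prop:exp} from \cite{duck-merle:wave}, noting only that the low-dimensional argument there goes through with minor changes, and concentrates all new work on the rigidity Theorem~\ref{u-is-wa2}.
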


The proof of Theorem \ref{exist-w} and \ref{u is w} will follow
roughly the same strategy as in \cite{duck-merle:wave}. Here we make
a remark about the proof of Theorem \ref{u is w}. The second point
(b) is a direct application of the variational characterization of $W$.
It only remains to prove (a) and (c). In \cite{duck-merle:wave}, a large portion of the
work was devoted to showing the exponential convergence of the
solution to $W$, which after several minor changes, also works for
higher dimensions. For this reason, we do not repeat that part of
the argument and build our starting point on the following

\begin{prop}[Exponential convergence to $W$ \cite{duck-merle:wave}]
\label{prop:exp} Let $u$ be the solution to \eqref{nlw} with initial
condition $(u_0, u_1)\in \dot H_x^1 \times L_x^2$ satisfying
$E(u_0,u_1)=E(W,0)$.

(a) In the case $\|\nabla u_0\|_2<\|\nabla W\|_2$, then $u$ exists
globally. Suppose also that $S_{(0,\infty)}(u)=\infty$, then
$S_{(-\infty,0)}(u)<\infty$ and there exist $\lambda_0>0$, $x_0\in
\R^d$, $c>0$ and $C>0$ such that
\begin{align}\label{sub-close}
\left\|\nabla
( u(t)-\lambda^{-\frac{d-2}2}W(\lambda_0^{-1} ({x+x_0})) ) \right\|_2+\|\partial_t
u (t)   \|_2\le Ce^{-ct}.
\end{align}

(b) In the case $\|\nabla u_0\|_2>\|\nabla W\|_2$, we also assume
that $u_0\in L^2_x$ and $u$ exists globally forward in time, then
there exist $c, C>0$ and $\lambda_0$, $x_0$ such that
\begin{align}
\left\|\nabla(
u(t)-\lambda_0^{-\frac{d-2}2}W(\lambda_0^{-1}({x+x_0})) ) \right\|_2+\|\partial_t
u(t)  \|_2\le Ce^{-ct}.
\end{align}
\end{prop}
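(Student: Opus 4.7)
The plan is to combine a Kenig--Merle type rigidity/compactness-modulo-symmetries argument with a finite-dimensional analysis of the dynamics near the ground state. First, I would exploit the variational characterization of $W$: the identity $E(u_0,u_1)=E(W,0)$ together with the strict inequality $\|\nabla u_0\|_2\neq\|\nabla W\|_2$ forces, by the standard convexity argument on the Lagrangian $J(v)=\tfrac12\|\nabla v\|_2^2-\tfrac{d-2}{2d}\|v\|_{2d/(d-2)}^{2d/(d-2)}$, the sign of $\|\nabla u(t)\|_2-\|\nabla W\|_2$ to be preserved along the flow. Under the nonscattering assumption in case (a) or the forward global existence assumption in case (b), the profile decomposition of Bahouri--Gérard together with a rigidity argument in the spirit of Kenig--Merle shows that the forward orbit $\{(u(t),\partial_t u(t))\}_{t\ge 0}$ is precompact in $\dot H_x^1\times L_x^2$ modulo the symmetries of scaling and translation. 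Since $W$ is, up to these symmetries, the unique nontrivial critical point of $E$ at the critical energy level, the orbit can only accumulate on the $W$-orbit, which produces parameters $\lambda(t),x(t)$ for which the remainder $h(t):=u(t)-W_{\lambda(t),x(t)}$ satisfies $\|h(t)\|_{\dot H_x^1}+\|\partial_t u(t)\|_{L_x^2}\to 0$ as $t\to+\infty$.

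Second, I would modulate and linearize. Fix $(\lambda(t),x(t))$ by imposing that $h(t)$ be orthogonal, in the inner product induced by the linearized energy, to the symmetry directions $\Lambda W_{\lambda(t),x(t)}$ and $\partial_{x_j}W_{\lambda(t),x(t)}$; the implicit function theorem selects these parameters smoothly as soon as $(h,\partial_t h)$ is small. The perturbation then satisfies
\begin{equation*}
\partial_{tt}h+Lh=N(h)+\text{(modulation corrections)},\qquad L:=-\Delta-\tfrac{d+2}{d-2}W^{4/(d-2)},
\end{equation*}
where $N(h)$ collects terms at least quadratic in $h$. The operator $L$ has a single simple negative eigenvalue $-e^2$ with smooth radial positive eigenfunction $\mathcal Y$, a finite-dimensional kernel generated by the symmetry directions (already annihilated by the modulation), and nonnegative essential spectrum. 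Setting $\alpha_\pm(t):=\langle h(t),\mathcal Y\rangle\pm e^{-1}\langle\partial_t h(t),\mathcal Y\rangle$, one finds $\dot\alpha_\pm=\pm e\,\alpha_\pm+O(\|(h,\partial_t h)\|_{\dot H_x^1\times L_x^2}^{2})$, while the conservation law $E(u,\partial_t u)=E(W,0)$, after expanding around $W_{\lambda(t),x(t)}$ and cancelling the vanishing linear and cubic contributions, yields the coercivity
\begin{equation*}
\|(h(t),\partial_t h(t))\|_{\dot H_x^1\times L_x^2}\lesssim |\alpha_+(t)|+|\alpha_-(t)|.
\end{equation*}

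Third, I would close by an ODE bootstrap on $\alpha_\pm$. The hypothesis that $u$ does not scatter (case (a)) or does not blow up forward (case (b)), together with the compactness-modulo-symmetries obtained in step one, rules out the exponential growth $|\alpha_+(t)|\sim e^{et}$ that the unstable ODE would otherwise produce: such growth would eject $u$ from every small neighborhood of the $W$-orbit. The standard stable/unstable dichotomy then forces $|\alpha_+(t)|\lesssim e^{-et}$, and the analogous analysis applied to $\alpha_-$ (together with the coercivity to absorb the quadratic remainder) gives $|\alpha_-(t)|\lesssim e^{-et}$. The claimed estimate follows from the coercivity inequality; the sub-threshold assertion $S_{(-\infty,0)}(u)<\infty$ in case (a) is then obtained from Theorem~\ref{gwp} since the exponentially decaying tail sits in the scattering region. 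The main obstacle I anticipate is step one: establishing precompactness modulo symmetries \emph{before} any exponential smallness is in hand. This requires the full wave-equation profile decomposition and a nontrivial rigidity theorem rather than a routine computation. Once compactness is secured, the remaining steps are finite-dimensional perturbation theory; the only high-dimensional subtlety is the control of the quadratic piece $W^{(6-d)/(d-2)}|h|^2$ of $N(h)$, which is singular at the origin for $d\ge 7$ and must be absorbed using the decay of $W$ at infinity together with the concentration (modulo symmetries) of the orbit.
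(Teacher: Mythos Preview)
The paper does not actually prove Proposition~\ref{prop:exp}: it is quoted verbatim from Duyckaerts--Merle \cite{duck-merle:wave} and explicitly taken as a black box (``we do not repeat that part of the argument and build our starting point on the following''). So there is no proof in the paper to compare your proposal against; the paper's only claim is that the original low-dimensional argument ``after several minor changes, also works for higher dimensions.''

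That said, your outline is broadly the Duyckaerts--Merle strategy, but several steps are underspecified or not quite right. First, you cannot deduce $S_{(-\infty,0)}(u)<\infty$ in case (a) from Theorem~\ref{gwp}: that theorem applies only under the strict inequality $E(u_0,u_1)<E(W,0)$, whereas here the energy is exactly at threshold. In \cite{duck-merle:wave} the backward scattering is obtained by a separate argument (if $u$ fails to scatter in both time directions one gets compactness in both directions and derives a contradiction via virial/monotonicity identities). Second, your sketch produces time-dependent parameters $\lambda(t),x(t)$ but the proposition asserts fixed $\lambda_0,x_0$; the convergence of the modulation parameters is a genuine additional step (control of $\dot\lambda/\lambda$ and $\dot x$ and then integration), which you do not address. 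Third, your ``no ejection'' argument for ruling out growth of $\alpha_+$ is circular as written: compactness modulo symmetries does not by itself prevent the solution from leaving a small neighborhood of $W$ (it could oscillate). The actual mechanism in \cite{duck-merle:wave} is a virial-type monotonicity formula that, combined with the energy identity, forces $\int_t^\infty \delta(s)\,ds\lesssim\delta(t)$ for a suitable distance $\delta(t)$ to $W$, and it is this integral inequality that yields exponential decay.
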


This paper is organized as follows. In Section 2, we introduce some
notations and collect some basic estimates. Section 3 is devoted to
proving Theorem \ref{exist-w}. In Section 4, we prove two useful
estimates. In Section 5, we use the two estimates to finish the
proof of Theorem \ref{u is w} by assuming Proposition
\ref{prop:exp}.

\subsection*{Acknowledgements} Both authors were supported by the National
Science Foundation under agreement No. DMS-0635607.  X.~Zhang was
also supported by NSF grant No.~10601060 and project 973 in China.





\section{Preliminaries}

We use $X \lesssim Y$ or $Y \gtrsim X$ whenever $X \leq CY$ for some
constant $C>0$. We use $O(Y)$ to denote any quantity $X$ such that
$|X| \lesssim Y$. We use the notation $X \sim Y$ whenever $X
\lesssim Y \lesssim X$.  We will add subscripts to $C$ to indicate
the dependence of $C$ on the parameters. For example, $C_{i,j}$
means that the constant $C$ depends on $i,j$. The dependence of $C$
upon dimension will be suppressed.

We use the `Japanese bracket' convention $\langle x \rangle := (1
+|x|^2)^{1/2}$.

Throughout this paper, we will use $p_c$ to denote the total power
of nonlinearity:
$$
p_c=\frac{d+2}{d-2}.
$$

We write $L^q_t L^r_{x}$ to denote the Banach space with norm
$$ \| u \|_{L^q_t L^r_x(\R \times \R^d)} := \Bigl(\int_\R \Bigl(\int_{\R^d}
|u(t,x)|^r\ dx\Bigr)^{q/r}\ dt\Bigr)^{1/q},$$ with the usual
modifications when $q$ or $r$ are equal to infinity, or when the
domain $\R \times \R^d$ is replaced by a smaller region of spacetime
such as $I \times \R^d$.  When $q=r$ we abbreviate $L^q_t L^q_x$ as
$L^q_{t,x}$.

For any $s\in\R$, we define fractional derivative $|\nabla|^s$ via
the Fourier transform
\begin{align*}
\widehat{|\nabla|^s f}(\xi)=(4\pi^2|\xi|^2)^{\frac s2}\hat f(\xi).
\end{align*}
and define $\dot W_x^{s,p}(\R^d)$ to be Sobolev space with the norm
\begin{align*}
\|f\|_{\dot W_x^{s,p}}=\||\nabla|^s f\|_p.
\end{align*}
When $p=2$, we write $\dot W_x^{s,2}$ as $\dot H_x^s$.

\subsection{Fractional chain rule}
We record the following results from \cite{kpv93}.
\begin{lem}[Fractional chain rule]\label{chr}
Let $p_1,p_2,p_3,p_4\in (1,\infty)$, $p_1\in (1,\infty]$, be such
that $\frac 1{p_1}+\frac 1{p_2}=\frac 1p$, $\frac 1{p_3}+\frac
1{p_4}=\frac 1p$. Let $F\in C^1(\R)$ be such that $F(0)=0$, then we
have
\begin{align}
\||\nabla|^{\frac 12}(fg)\|_{p}\lsm \|f\|_{p_1}\||\nabla|^{\frac
12}g\|_{p_2}+\||\nabla|^{\frac 12} f\|_{p_3}\|g\|_{p_4}.\\
\||\nabla|^{\frac 12}F(f)\|_{p}\lsm\|F'(f)\|_{p_1}\||\nabla|^{\frac
12} f\|_{p_2}.
\end{align}
\end{lem}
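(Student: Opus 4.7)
The plan is to prove both inequalities via the Littlewood-Paley / paraproduct formalism that underlies the Kato-Ponce inequality. For the first (Leibniz-type) bound, I would introduce the homogeneous Littlewood-Paley projections $\Delta_j$ and low-frequency cutoffs $S_{j-3}=\sum_{k\le j-3}\Delta_k$, and use the paraproduct decomposition
$$fg \;=\; \sum_j (S_{j-3}f)(\Delta_j g) \;+\; \sum_j (\Delta_j f)(S_{j-3}g) \;+\; \sum_{|j-k|\le 2}(\Delta_j f)(\Delta_k g).$$
In the first piece, the frequencies of $(S_{j-3}f)(\Delta_j g)$ concentrate near $2^j$, so applying $|\nabla|^{1/2}$ is, modulo a Mikhlin-type multiplier, the same as replacing $\Delta_j g$ by $|\nabla|^{1/2}\Delta_j g$. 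The vector-valued Littlewood-Paley characterization of $L^p$, combined with the pointwise bound $|S_{j-3}f|\lesssim Mf$ and the Fefferman-Stein maximal inequality, then produces the bound $\|f\|_{p_1}\||\nabla|^{1/2}g\|_{p_2}$. The second piece is symmetric and yields $\||\nabla|^{1/2}f\|_{p_3}\|g\|_{p_4}$, and the diagonal piece can be absorbed into either by shifting the $1/2$-derivative onto whichever factor one wishes via Bernstein and $\ell^2$-Cauchy-Schwarz.

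For the chain rule, I would start from the pointwise identity
$$F(f(x))-F(f(y)) \;=\; (f(x)-f(y))\int_0^1 F'\bigl((1-\theta)f(y)+\theta f(x)\bigr)\,d\theta,$$
valid because $F\in C^1$ with $F(0)=0$, and combine it with the singular-integral representation
$$|\nabla|^{1/2}h(x) \;=\; c_d\;\text{p.v.}\int_{\R^d}\frac{h(x)-h(y)}{|x-y|^{d+\frac12}}\,dy,$$
which holds for $s=1/2\in(0,1)$. Substituting the first formula into the second and estimating the average of $F'$ pointwise by $M(F'(f))(x)+M(F'(f))(y)$, one obtains a bound on $\bigl|\,|\nabla|^{1/2}F(f)(x)\bigr|$ in terms of a radial convolution whose dual (via the same integral representation) reconstructs $|\nabla|^{1/2}f(x)$. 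A H\"older inequality in the $(p_1,p_2)$ duality, followed by the $L^{p_1}$-boundedness of the maximal function, then closes the estimate.

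The main obstacle is the endpoint $p_1=\infty$, where one cannot directly apply H\"older to $S_{j-3}f$; instead the low-frequency factor must be controlled by the Hardy-Littlewood maximal function inside the $\ell^2$ sum, and the Fefferman-Stein inequality is what lets one then pass to the outer $L^p$ norm. A secondary technical point for the chain rule is that the singular-integral formula for $|\nabla|^{1/2}$ is a literal pointwise identity only under enough decay/smoothness on $F(f)$; the hypothesis $F(0)=0$ is exactly what guarantees that $F(f)$ inherits decay from $f$, making the double integrals and the Fubini/Minkowski steps legitimate. Once these two points are handled the rest is routine paraproduct bookkeeping, and both inequalities follow.
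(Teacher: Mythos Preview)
The paper does not prove this lemma; it is simply quoted from \cite{kpv93}. Your paraproduct treatment of the Leibniz estimate is the standard Coifman--Meyer/Kato--Ponce route and is fine.

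For the chain rule there is a genuine gap. The asserted pointwise bound
\[
\Bigl|\int_0^1 F'\bigl((1-\theta)f(y)+\theta f(x)\bigr)\,d\theta\Bigr|\ \lesssim\ M(F'\!\circ\! f)(x)+M(F'\!\circ\! f)(y)
\]
is false for general $F\in C^1$. The interpolant $(1-\theta)f(y)+\theta f(x)$ is an intermediate \emph{value}, not an intermediate \emph{point}, and nothing ties $F'$ at such a value to spatial averages of $F'\!\circ\! f$. For a counterexample let $F'\ge 0$ be a smooth spike of height $N$ and width $1/N$ centered at $0$ (so $\int F'=1$), and choose $f$ with $f(x)=1$, $f(y)=-1$ and with the level set $\{z:|f(z)|\le 1/N\}$ of arbitrarily small measure near $x$ and $y$; then the left side equals $\tfrac12$ while both maximal terms are $o(1)$ as $N\to\infty$. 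Your inequality \emph{does} hold whenever $|F'(a+b)|\lesssim |F'(a)|+|F'(b)|$, in particular for the power-type nonlinearities to which the paper actually applies the lemma, but not under the bare hypothesis $F\in C^1$. The Christ--Weinstein argument behind \cite{kpv93} does not attempt this pointwise step; it works through a Littlewood--Paley square-function characterization together with a telescoping in the smoothing parameter, and closes the estimate without ever invoking the inequality above.
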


\subsection{Linear wave equation and Strichartz estimates}

\begin{defn}[Wave-admissible pair]\label{admissible}
Let $d\ge 6$, we say the couple $(q,r)$ is admissible if $2\le
q\le\infty$ and
\begin{align*}
\frac 2q=(d-1)(\frac 12-\frac 1r).
\end{align*}
\end{defn}
Denote $I$ be a time slab and $\beta(r)=\frac{d+1}2(\frac 12-\frac
1r)$. We define $\dot S^1(I)$, $\dot N^1(I)$ to be the Banach space
with the norm
\begin{align*}
\|u\|_{\dot S^1(I)} &=\sup_{(q,r)\text{ admissible}} (\|u\|_{L_t^q\dot
W_x^{1-\beta(r),r}(I\times\R^d)}+\|\partial_t u\|_{L_t^q\dot W_x^{-\beta(r),r}(I\times\R^d)}),\\
\|u\|_{\dot N^1(I)} &=\inf_{u=u_1+u_2}
\|u_1\|_{L_t^{1}L_x^2(I\times\R^d)}+\||\nabla|^{\frac
12}u_2\|_{L_{t,x}^{\frac{2(d+1)}{d+3}}(I\times\R^d)}.
\end{align*}
With these notations, we write Strichartz inequalities for linear
wave equation as follows

\begin{lem}[Strichartz estimate \cite{gv95,ls95,tao:keel}] Let $f\in \dot H^1_x$, $g\in L_x^2$. Let $I$ be a time slab containing
$t_0$. Let $F\in \dot N^1(I)$. Then the solution $u(t,x)$ to the
equation
\begin{align}\label{lw}
\begin{cases}
u_{tt}-\Delta u=F,\\
u(t_0,x)=f(x),\ u_t(t_0,x)=g(x)
\end{cases}
\end{align}
satisfies the Duhamel's formula:
\begin{align}\label{duhamel}
u(t,x)=\cos(\sqrt{-\Delta}(t-t_0))
f+\frac{\sin((t-t_0)\sqrt{-\Delta})}{\sqrt{-\Delta}} g
+\int_{t_0}^{t}\frac{\sin((t-s)\sqrt{-\Delta})}{\sqrt{-\Delta}}F(s)
ds.
\end{align}
Moreover,
\begin{align*}
\|u\|_{\dot S^1(I)}\lsm \|f\|_{\dot H_x^1}+\|g\|_2 +\|F\|_{\dot
N^1(I)}.
\end{align*}
\end{lem}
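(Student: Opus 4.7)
The plan is to first verify Duhamel's formula \eqref{duhamel} by direct functional calculus, and then separately estimate each of its three terms in the $\dot S^1(I)$ norm. Setting $A:=\sqrt{-\Delta}$, I would apply $\partial_t^2+A^2$ to the right-hand side of \eqref{duhamel}: the two free pieces are annihilated by $(\partial_t^2+A^2)\cos(tA)=0=(\partial_t^2+A^2)\sin(tA)$, while the Duhamel integral produces $F(t)$ after differentiating twice in $t$ and using $\sin 0=0$. The initial conditions $u(t_0)=f$ and $\partial_t u(t_0)=g$ then follow from $\cos 0=1$ and the vanishing of the integral at $t=t_0$.

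For the Strichartz bound itself, the key input that I take from \cite{gv95,ls95,tao:keel} is the base-level free-wave inequality
$$\|e^{\pm itA}h\|_{L^q_tL^r_x(\R\times\R^d)}\lsm \||\nabla|^{\beta(r)}h\|_{L^2_x},$$
valid for every wave-admissible $(q,r)$, where $\beta(r)$ is dictated by the scaling identity $\beta(r)+\frac{d}{r}+\frac{1}{q}=\frac{d}{2}$. Writing $\cos((t-t_0)A)=\frac12(e^{i(t-t_0)A}+e^{-i(t-t_0)A})$ and applying this estimate to $h=|\nabla|^{1-\beta(r)}f$ gives $\|\cos((t-t_0)A)f\|_{L^q_t\dot W^{1-\beta(r),r}_x}\lsm\|f\|_{\dot H^1_x}$, and similarly applied to $h=|\nabla|^{-\beta(r)}g$ it gives $\|\sin((t-t_0)A)A^{-1}g\|_{L^q_t\dot W^{1-\beta(r),r}_x}\lsm\|g\|_{L^2_x}$. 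The companion bounds on $\partial_t u$ in $L^q_t\dot W^{-\beta(r),r}_x$ follow identically, since $\partial_t$ interchanges $\cos$ and $\pm A\sin$, and the resulting extra factor of $A=|\nabla|$ exactly balances the drop by one in the Sobolev exponent on the left side.

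For the inhomogeneous contribution I choose a near-optimal decomposition $F=F_1+F_2$ realizing $\|F\|_{\dot N^1(I)}$. The $F_1$ part of the Duhamel integral is handled by Minkowski in $s$ followed by the homogeneous bound just proved, giving a contribution $\lsm\|F_1\|_{L^1_tL^2_x}$ uniformly over admissible pairs. For $F_2$ the key observation is that the symmetric pair $(\tilde q,\tilde r):=(\frac{2(d+1)}{d-1},\frac{2(d+1)}{d-1})$ is wave-admissible and satisfies $\beta(\tilde r)=\frac12$, precisely the derivative count appearing in the second slot of $\dot N^1$. Dualizing the base-level Strichartz against $(\tilde q,\tilde r)$ and passing from the symmetric full-line form to the retarded Duhamel integral via the Christ--Kiselev lemma (applicable since $\tilde q'=\frac{2(d+1)}{d+3}<2\le q$ for every admissible $q$, which keeps us strictly off the forbidden double endpoint on at least one side) yields
$$\Bigl\|\int_{t_0}^t \sin((t-s)A)A^{-1}F_2(s)\,ds\Bigr\|_{L^q_t\dot W^{1-\beta(r),r}_x}\lsm \||\nabla|^{1/2}F_2\|_{L^{2(d+1)/(d+3)}_{t,x}}.$$

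Since this is a classical computation I do not anticipate any real obstacle; the items most in need of care are the book-keeping of scaling exponents, in particular the two identities $\beta(r)+\frac{d}{r}+\frac{1}{q}=\frac{d}{2}$ for all wave-admissible $(q,r)$ and $\beta(\frac{2(d+1)}{d-1})=\frac12$, both immediate from the admissibility relation $\frac{2}{q}=(d-1)(\frac{1}{2}-\frac{1}{r})$ and the definition of $\beta$.
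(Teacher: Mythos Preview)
The paper does not prove this lemma at all: it is stated with citations to \cite{gv95,ls95,tao:keel} and no proof is given, since the result is classical and taken as input from the literature. Your sketch is a correct outline of the standard $TT^*$/Christ--Kiselev proof of these estimates; the exponent bookkeeping you flag (in particular $\beta(\tfrac{2(d+1)}{d-1})=\tfrac12$ and $\tilde q'=\tfrac{2(d+1)}{d+3}<2\le q$) checks out, so the Christ--Kiselev reduction is legitimate for every admissible output pair, including the Keel--Tao endpoint $q=2$.
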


\begin{rem}\label{rem:h1-adm} Let $d\ge 6$ and $(q,r)$ admissible.
Let $\tilde r$ be such that
\begin{align}\label{h1-admis}
\frac 1q+\frac d{\tilde r}=\frac{d-2}2.
\end{align}
Then by Sobolev embedding we have
$$
\|f\|_{L_x^{\tilde r}}\lsm \|f\|_{\dot W_x^{1-\beta(r),r}}.
$$
Therefore
$$
\|u\|_{L_t^q L_x^{\tilde r}(I\times\R^d)}\lsm \|u\|_{\dot S^1(I)}.
$$
For example, we can take $(q,\tilde r)=(\infty, \frac{2d}{d-2})$.
Other examples will be used in this paper without explicitly mentioning this
embedding.
\end{rem}

\subsection{Space $H^{m,m}$ and its basic properties.}
Let $m>0$ be an integer. Define $H^{m,m}$ to be the Banach space
with the norm
$$
\|f\|_{H^{m,m}}=\sum_{0\le j\le m}\|\langle x\rangle^{m-j}\nabla ^j
f\|_2.
$$
We collect several useful lemmas.

\begin{lem}[Estimates of linear solutions]\label{lin-hmm}
Let $m$ be a positive integer. Let $u$ be the solution of
\begin{align*}
\begin{cases}
u_{tt}-\Delta u=0,\\
u(0,x)=f(x),\ u_t(0,x)=g(x).
\end{cases}
\end{align*}
Then there exists a $m$-dependent constant $C>0$ such that
\begin{align}
\|u_t(t)\|_{H^{m,m}}+\|\nabla u(t)\|_{H^{m,m}}&\le (\|\nabla
f\|_{H^{m,m}}+\|g\|_{H^{m,m}})e^{Ct}.\label{lin-hmm1}\\
\|u(t)\|_{H^{m,m}}&\le (\|\nabla f\|_{H^{m,m}}+\|g\|_{H^{m,m}}+\|
f\|_{H^{m,m}})e^{Ct}.\label{lin-hmm2}
\end{align}
\end{lem}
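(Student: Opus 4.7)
The plan is to derive differential inequalities for a family of weighted energies indexed by derivative order, then close them via Gronwall. For each multi-index $\alpha$ with $|\alpha|=j$, $0\le j\le m$, define
$$\mathcal E_\alpha(t)=\|\langle x\rangle^{m-j}\partial^\alpha u_t(t)\|_2^2+\|\langle x\rangle^{m-j}\nabla\partial^\alpha u(t)\|_2^2.$$
Summing these is equivalent to $\|u_t(t)\|_{H^{m,m}}^2+\|\nabla u(t)\|_{H^{m,m}}^2$, so bounding each $\mathcal E_\alpha$ suffices for \eqref{lin-hmm1}.

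Since $[\partial^\alpha,\Delta]=0$, the function $\partial^\alpha u$ solves the free wave equation. Differentiating $\mathcal E_\alpha$ in time and using $\partial_{tt}\partial^\alpha u=\Delta\partial^\alpha u$ yields
$$\frac{d}{dt}\mathcal E_\alpha(t)=2\int\langle x\rangle^{2(m-j)}\partial^\alpha u_t\,\Delta\partial^\alpha u\,dx+2\int\langle x\rangle^{2(m-j)}\nabla\partial^\alpha u\cdot\nabla\partial^\alpha u_t\,dx.$$
Integrating the first integral by parts, the main (non-commutator) term cancels exactly against the second integral, leaving only
$$\frac{d}{dt}\mathcal E_\alpha(t)=-2\int\nabla(\langle x\rangle^{2(m-j)})\cdot\nabla\partial^\alpha u\,\partial^\alpha u_t\,dx.$$
Since $|\nabla\langle x\rangle^{2(m-j)}|\lesssim\langle x\rangle^{2(m-j)-1}$ and $\langle x\rangle\ge 1$, Cauchy--Schwarz gives
$$\Bigl|\tfrac{d}{dt}\mathcal E_\alpha(t)\Bigr|\lesssim \|\langle x\rangle^{m-j-1}\partial^\alpha u_t\|_2\,\|\langle x\rangle^{m-j}\nabla\partial^\alpha u\|_2\lesssim \mathcal E_\alpha(t).$$
Gronwall then yields $\mathcal E_\alpha(t)\le\mathcal E_\alpha(0)e^{Ct}$, and summing over $\alpha$ establishes \eqref{lin-hmm1}.

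For \eqref{lin-hmm2} we further need to control $\|u(t)\|_{H^{m,m}}$, which differs from $\|\nabla u(t)\|_{H^{m,m}}$ only by the term $\|\langle x\rangle^m u(t)\|_2$. Setting $F(t)=\|\langle x\rangle^m u(t)\|_2^2$, we compute $\tfrac{d}{dt}F=2\int\langle x\rangle^{2m}u\,u_t\,dx$, whence $\tfrac{d}{dt}\sqrt{F(t)}\le \|\langle x\rangle^m u_t(t)\|_2\le \|u_t(t)\|_{H^{m,m}}$. Integrating in time and inserting \eqref{lin-hmm1} produces the desired bound. The main bookkeeping point to check is that each integration by parts produces lower-order weights but \emph{not} lower-order derivatives; since $\langle x\rangle\ge 1$, the loss $\langle x\rangle^{m-j-1}$ is controlled by $\langle x\rangle^{m-j}$ at the same derivative order, so the differential inequalities decouple across $j$ and no nontrivial coupling to different levels is required.
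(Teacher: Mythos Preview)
Your proof is correct and is precisely what the paper has in mind: the authors simply assert that \eqref{lin-hmm1} ``follows directly from the standard energy method'' and that \eqref{lin-hmm2} ``is a consequence of \eqref{lin-hmm1} and the Fundamental Theorem of Calculus,'' which is exactly your weighted-energy/Gronwall argument for the first estimate and your time-integration of $\|\langle x\rangle^m u_t\|_2$ for the second. You have faithfully expanded the details the paper omits, with no substantive difference in approach.
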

\begin{proof} \eqref{lin-hmm1} follows directly from the standard
energy method. The second one \eqref{lin-hmm2} is a consequence of
\eqref{lin-hmm1} and the Fundamental Theorem of Calculus.
\end{proof}

\begin{lem}\label{sigma-hmm}
Let $t_0>0$, $\alpha>0$. Let $\Sigma_{t_0} $ be the Banach space
with the norm
\begin{align*}
\|u\|_{\Sigma_{t_0}}=\sup_{t\ge t_0} e^{\alpha t}\|u(t)\|_{H^{m,m}}.
\end{align*}
Then there exists a $m$-dependent constant $C$ such that
\begin{align}
\biggl\|\int_t^{\infty}\frac{\sin((t-\tau)\sqrt{-\Delta})}{\sqrt{-\Delta}}
F(\tau)d\tau\biggr\|_{\Sigma_{t_0}}\le \frac
1{\alpha-C}\|F\|_{\Sigma_{t_0}}.\label{est-int-hmm}
\end{align}
\end{lem}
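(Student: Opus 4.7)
The plan is to reduce this to a pointwise-in-$\tau$ estimate for each slice of the Duhamel integral and then integrate out the exponential weight in $\tau$. Set
\[
u(t) = \int_t^{\infty}\frac{\sin((t-\tau)\sqrt{-\Delta})}{\sqrt{-\Delta}} F(\tau)\, d\tau,
\]
and for each fixed $\tau \ge t$ view $s \mapsto \frac{\sin(s\sqrt{-\Delta})}{\sqrt{-\Delta}} F(\tau)$ as the solution of the free wave equation with Cauchy data $(0,F(\tau))$. The plan is to apply Lemma \ref{lin-hmm}, in particular \eqref{lin-hmm2}, to this solution at the (possibly negative) time $s = t - \tau$, absorbing the absolute value of the time into the exponential $e^{C|s|}$.

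Concretely, the first step gives the pointwise bound
\[
\Bigl\|\tfrac{\sin((t-\tau)\sqrt{-\Delta})}{\sqrt{-\Delta}} F(\tau)\Bigr\|_{H^{m,m}} \le e^{C(\tau - t)}\,\|F(\tau)\|_{H^{m,m}},
\]
with $C$ the $m$-dependent constant from Lemma \ref{lin-hmm}. Next, I use the very definition of $\Sigma_{t_0}$ to replace $\|F(\tau)\|_{H^{m,m}}$ by $e^{-\alpha\tau}\|F\|_{\Sigma_{t_0}}$, which is valid since the integration runs over $\tau \ge t \ge t_0$. Inserting this into the triangle inequality applied to the Duhamel integral yields
\[
\|u(t)\|_{H^{m,m}} \le \|F\|_{\Sigma_{t_0}}\, e^{-Ct}\int_t^{\infty} e^{-(\alpha - C)\tau}\, d\tau = \frac{1}{\alpha - C}\, e^{-\alpha t}\, \|F\|_{\Sigma_{t_0}},
\]
after which multiplication by $e^{\alpha t}$ and supremum over $t\ge t_0$ produces the claimed bound \eqref{est-int-hmm}.

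There is no serious obstacle; the only delicate point is making sure the constant $C$ in the time-dependence of Lemma \ref{lin-hmm} is the \emph{same} constant that appears in the statement of Lemma \ref{sigma-hmm}, so the exponent $\alpha - C$ in the $\tau$-integral is indeed positive whenever the right-hand side of \eqref{est-int-hmm} is finite. This is arranged simply by choosing the $m$-dependent constant in the present lemma to be the one supplied by Lemma \ref{lin-hmm}, with the tacit restriction $\alpha > C$ so that the resulting integral is convergent. With that choice, the entire argument is a one-line application of the linear $H^{m,m}$ estimate followed by an explicit integration of an exponential in $\tau$.
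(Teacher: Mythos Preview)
Your proof is correct and follows essentially the same approach as the paper: apply Lemma \ref{lin-hmm} (with data $(0,F(\tau))$ and time $t-\tau$) to get the pointwise bound $e^{C|t-\tau|}\|F(\tau)\|_{H^{m,m}}$, insert $\|F(\tau)\|_{H^{m,m}}\le e^{-\alpha\tau}\|F\|_{\Sigma_{t_0}}$, and integrate the resulting exponential in $\tau$. The paper likewise notes the implicit restriction $\alpha>C$ only through the form of the final bound.
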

\begin{proof}
Using Duhamel's formula, Lemma \ref{lin-hmm} gives
that
\begin{align*}
\biggl\|\frac{\sin(t\sqrt{-\Delta})}{\sqrt{-\Delta}}
g\biggr\|_{H^{m,m}}\le e^{C|t|}\|g\|_{H^{m,m}}.
\end{align*}
Applying this and Minkowski inequality we have
\begin{align*}
\biggl\|\int_t^{\infty}
\frac{\sin((t-\tau)\sqrt{-\Delta})}{\sqrt{-\Delta}}
F(\tau)d\tau\biggr\|_{H^{m,m}}&\le
\int_t^{\infty}\biggl\|\frac{\sin((t-\tau)\sqrt{-\Delta})}{\sqrt{-\Delta}}
F(\tau)\biggr\|_{H^{m,m}}d\tau\\
&\le \int_t^{\infty} e^{C|t-\tau|}\|F(\tau)\|_{H^{m,m}}d\tau\\
&\le \int_t^{\infty} e^{C(\tau-t)}
e^{-\alpha\tau}\|F\|_{\Sigma_{t_0}} d\tau\\
&\le \frac{e^{-\alpha t}}{\alpha-C}\|F\|_{\Sigma_{t_0}},
\end{align*}
which gives immediately \eqref{est-int-hmm}.
\end{proof}

We record several useful lemmas from \cite{lz:schrodinger}. The
proof can be found in \cite{lz:schrodinger}.

\begin{lem}[Embedding in $H^{m,m}$] \label{lem_embed_208}
Let $k_1$, $k_2$ be non-negative integers, then for any $m\ge k_1
+k_2 + \frac d2+1$, we have
\begin{align*}
 \| \langle x \rangle^{k_1} \nabla^{k_2} f\|_{\infty}
\lesssim \|f\|_{H^{m,m}},
\end{align*}
where the implicit constant depends only on $k_1$, $k_2$.
\end{lem}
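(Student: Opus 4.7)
The strategy is to reduce the weighted $L^\infty$ estimate to the standard Sobolev embedding $H^s(\R^d)\hookrightarrow L^\infty(\R^d)$, which holds for any integer $s>d/2$. I would choose $s$ to be the smallest such integer, so that $s\le d/2+1$. With this choice it suffices to establish
\[
\sum_{j=0}^{s}\|\nabla^{j}(\langle x\rangle^{k_1}\nabla^{k_2}f)\|_{2}\lesssim \|f\|_{H^{m,m}}.
\]

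The next step is to expand each summand on the left via the Leibniz rule as a combination of products $(\nabla^{l}\langle x\rangle^{k_1})(\nabla^{j-l+k_2}f)$ with $0\le l\le j$. A direct induction applied to $\langle x\rangle^{k_1}=(1+|x|^{2})^{k_1/2}$ gives the pointwise bound $|\nabla^{l}\langle x\rangle^{k_1}|\lesssim \langle x\rangle^{k_1-l}$ when $l\le k_1$, and $|\nabla^{l}\langle x\rangle^{k_1}|\lesssim 1$ when $l\ge k_1$. Hence after taking $L^{2}_{x}$ norms, each term is dominated by an expression of the form $\|\langle x\rangle^{\max(k_1-l,\,0)}\nabla^{j-l+k_2}f\|_{2}$.

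The final step is bookkeeping: I match each dominating term to a summand $\|\langle x\rangle^{m-i}\nabla^{i}f\|_{2}$ in $\|f\|_{H^{m,m}}$ by setting $i:=j-l+k_{2}$. The matching requires $0\le i\le m$ together with $\max(k_1-l,0)\le m-i$, and the latter reduces to $m\ge k_1+k_2+j-2l$. The worst case $l=0$, $j=s$ requires $m\ge k_1+k_2+s$, which follows from the hypothesis $m\ge k_1+k_2+\tfrac{d}{2}+1$ since $s\le d/2+1$. The condition $i\le m$ is then automatic from the same bound.

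There is no serious obstacle once the Sobolev embedding is invoked; the essential ingredient is the pointwise observation that every derivative of the weight $\langle x\rangle^{k_1}$ gains one power of decay. This lets every differentiation of $\langle x\rangle^{k_1}\nabla^{k_2}f$ move cleanly into the weighted $L^{2}$ scale that defines $H^{m,m}$, and the stated lower bound on $m$ is precisely what is needed to absorb the derivative loss coming from the Sobolev embedding.
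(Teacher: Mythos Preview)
Your argument is correct. The paper does not actually supply a proof of this lemma; it records the statement and refers to the companion paper \cite{lz:schrodinger} for the proof. Your approach---reducing to the standard Sobolev embedding $H^{s}(\R^{d})\hookrightarrow L^{\infty}(\R^{d})$ with $s$ the least integer exceeding $d/2$, then expanding $\nabla^{j}(\langle x\rangle^{k_{1}}\nabla^{k_{2}}f)$ by the Leibniz rule and using the elementary pointwise bound $|\nabla^{l}\langle x\rangle^{k_{1}}|\lesssim\langle x\rangle^{k_{1}-l}$---is the natural and standard way to prove such a weighted embedding, and the bookkeeping you outline correctly shows that the hypothesis $m\ge k_{1}+k_{2}+\tfrac{d}{2}+1$ is exactly what is needed to place every term back into the $H^{m,m}$ norm. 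One cosmetic remark: your sentence ``the latter reduces to $m\ge k_{1}+k_{2}+j-2l$'' is literally correct only when $l\le k_{1}$; for $l>k_{1}$ the constraint becomes $m\ge i=j-l+k_{2}$, which is strictly weaker, so your identification of the worst case $l=0$, $j=s$ is unaffected.
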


\begin{lem}[Bilinear estimate in $H^{m,m}$]\label{bilinear}  We have
\begin{equation}
\|f g\|_{H^{m,m}}\lesssim \|f\|_{W^{m,\infty}}\|g\|_{H^{m,m}},
\end{equation}
with the implicit constant depending only on $m$.
\end{lem}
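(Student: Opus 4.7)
The plan is to reduce the bilinear estimate to the ordinary Leibniz rule combined with Hölder's inequality, exploiting that $m$ is an integer and that the weight satisfies $\langle x\rangle \ge 1$.

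First, for each $0\le j\le m$ I would expand $\nabla^j(fg)$ via the Leibniz rule as a linear combination (with binomial coefficients) of the products $(\nabla^k f)(\nabla^{j-k} g)$ for $0\le k\le j$. This reduces matters to estimating each term $\|\langle x\rangle^{m-j}(\nabla^k f)(\nabla^{j-k} g)\|_2$ and then summing over $j$ and $k$ with constants depending only on $m$.

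For each such term I would apply Hölder by placing $\nabla^k f$ in $L^\infty$ and the weighted derivative of $g$ in $L^2$. The only point to verify is that the weight $\langle x\rangle^{m-j}$ is compatible with the $H^{m,m}$-norm of $g$, which involves $\langle x\rangle^{m-(j-k)}$. Since $k\ge 0$ and $\langle x\rangle\ge 1$, we have $\langle x\rangle^{m-j}\le \langle x\rangle^{m-(j-k)}$, and consequently
\begin{equation*}
\|\langle x\rangle^{m-j}(\nabla^k f)(\nabla^{j-k} g)\|_2 \le \|\nabla^k f\|_\infty\,\|\langle x\rangle^{m-(j-k)}\nabla^{j-k} g\|_2 \le \|f\|_{W^{m,\infty}}\|g\|_{H^{m,m}}.
\end{equation*}
Summing over the finitely many pairs $(j,k)$ with $0\le k\le j\le m$ produces the claimed inequality with an implicit constant depending only on $m$.

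I do not anticipate any genuine obstacle here. Because $m$ is a positive integer, there is no need to invoke fractional chain rules or Kato--Ponce type commutator estimates; the argument is essentially bookkeeping around the Leibniz rule, and the only mildly substantive ingredient is the weight comparison $\langle x\rangle^{m-j}\le \langle x\rangle^{m-(j-k)}$ that allows every factor of $g$ to be absorbed into its $H^{m,m}$-norm.
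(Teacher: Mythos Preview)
Your argument is correct. The paper itself does not give a proof of this lemma but simply records it as a result from \cite{lz:schrodinger}; your Leibniz-rule-plus-H\"older computation, together with the weight comparison $\langle x\rangle^{m-j}\le \langle x\rangle^{m-(j-k)}$, is exactly the elementary verification one expects and supplies a self-contained proof that the paper omits.
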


\begin{lem}\label{multi}
Let $C>0$, $j\ge 2$ and $m\ge \frac d2+1+\frac{Cj}{j-1}$, then
$$
\|\langle x\rangle^{Cj}h^j\|_{H^{m,m}}\lesssim j^m
\|h\|_{H^{m,m}}^j,
$$
where the implicit constant depends only on $m$.
\end{lem}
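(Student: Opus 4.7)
The plan is to bound each summand $\|\langle x\rangle^{m-k}\nabla^k(\langle x\rangle^{Cj}h^j)\|_2$ of the $H^{m,m}$ norm individually for $0\le k\le m$. First, I would apply the Leibniz rule to the product $\langle x\rangle^{Cj}\cdot h^j$, using the pointwise bound $|\nabla^{k_0}\langle x\rangle^{Cj}|\lesssim \langle x\rangle^{Cj-k_0}$ (the implicit constant depends on $m$ but not on $j$, because only $k_0\le m$ derivatives fall on the weight). This reduces matters to controlling $\|\langle x\rangle^{m-k_1+Cj}\nabla^{k_1}(h^j)\|_2$ for each $0\le k_1\le m$.

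Next, expanding $\nabla^{k_1}(h^j)$ by the generalized Leibniz rule yields a sum over multi-indices $(n_1,\dots,n_j)$ of nonnegative integers with $n_1+\cdots+n_j=k_1$, of multinomial coefficients $\binom{k_1}{n_1,\dots,n_j}$ times products $\nabla^{n_1}h\cdots\nabla^{n_j}h$. For each such term I would pick an index $i^\ast$ attaining $n_{i^\ast}=\max_i n_i$, place that factor in $L^2$ with weight $\langle x\rangle^{m-n_{i^\ast}}$, and distribute the remaining weight among the other $j-1$ factors by writing $\langle x\rangle^{m-k_1+Cj}=\langle x\rangle^{m-n_{i^\ast}}\prod_{i\ne i^\ast}\langle x\rangle^{\alpha_i}$, with $\alpha_i\ge 0$ integers chosen so that $\sum_{i\ne i^\ast}(\alpha_i+n_i)=Cj$ and each $\alpha_i+n_i$ is at most $\lceil Cj/(j-1)\rceil$ (possible by pigeonhole/averaging). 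Hölder's inequality and the definition of $H^{m,m}$ then bound the $L^2$ factor by $\|h\|_{H^{m,m}}$, while Lemma \ref{lem_embed_208} bounds each $L^\infty$ factor $\|\langle x\rangle^{\alpha_i}\nabla^{n_i}h\|_\infty$ by $\|h\|_{H^{m,m}}$, using precisely the hypothesis $m\ge \frac{d}{2}+1+\frac{Cj}{j-1}$ to satisfy the condition $m\ge\alpha_i+n_i+\frac{d}{2}+1$ uniformly.

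The $j^m$ factor then emerges from summing the multinomial coefficients: $\sum_{n_1+\cdots+n_j=k_1}\binom{k_1}{n_1,\dots,n_j}=j^{k_1}\le j^m$. Combined with the $m$-dependent constants produced by the outer Leibniz and the at most $j$-fold choice of $i^\ast$ (absorbed in $j^m$), one obtains the claimed bound $\|\langle x\rangle^{Cj}h^j\|_{H^{m,m}}\lesssim j^m\|h\|_{H^{m,m}}^j$. The main obstacle is the combinatorial bookkeeping together with the pigeonhole weight distribution: one must verify that after putting the factor of largest derivative order in $L^2$, the remaining weight $Cj$ can be split evenly enough among exactly $j-1$ factors so that Lemma \ref{lem_embed_208} still applies; the average size $Cj/(j-1)$ is the sharp threshold, which is why the hypothesis takes exactly the form $m\ge \frac{d}{2}+1+\frac{Cj}{j-1}$.
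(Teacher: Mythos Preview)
The paper does not prove this lemma here; it records the statement and defers the proof to the companion paper \cite{lz:schrodinger}. Your outline is exactly the natural argument and is essentially what one expects to find there: Leibniz on the weight, generalized Leibniz on $h^j$, one factor in $L^2$, the rest in $L^\infty$ via Lemma~\ref{lem_embed_208}, multinomial sum producing $j^m$.

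Two points of bookkeeping need tightening. First, the bound $|\nabla^{k_0}\langle x\rangle^{Cj}|\lesssim \langle x\rangle^{Cj-k_0}$ does \emph{not} hold with a $j$-independent constant: each derivative pulls down a factor comparable to $Cj$, so the correct bound is $|\nabla^{k_0}\langle x\rangle^{Cj}|\lesssim (Cj)^{k_0}\langle x\rangle^{Cj-k_0}$. This is harmless, since $(Cj)^{k_0}\cdot j^{k_1}\le C^m j^{k_0+k_1}\le C^m j^m$ is still absorbed in the allowed $j^m$ factor.

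Second, your pigeonhole claim---that one can choose $\alpha_i\ge 0$ with $\sum_{i\ne i^\ast}(\alpha_i+n_i)=Cj$ and each $\alpha_i+n_i\le\lceil Cj/(j-1)\rceil$---is not always possible as written: it forces $n_i\le\lceil Cj/(j-1)\rceil$ for every $i\ne i^\ast$, which can fail (and $Cj$ need not be an integer). The clean fix is to use the actual threshold $M:=m-\tfrac d2-1\ge Cj/(j-1)$ from Lemma~\ref{lem_embed_208} as the cap, and to allow less weight on the $L^2$ factor when $\sum_{i\ne i^\ast}n_i>Cj$. Since $n_i\le n_{i^\ast}$ and $n_i+n_{i^\ast}\le k_1\le m$ give $n_i\le m/2$ for $i\ne i^\ast$, the constraint $n_i\le M$ holds whenever $m\ge d+2$. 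This covers the only regime the paper uses (there $m\ge 3d$). For the borderline range $\tfrac d2+1<m<d+2$ (which arises only for very small $C$) the $L^2\times L^\infty^{\,j-1}$ split is not enough and one would need a finer H\"older/interpolation step; that regime is irrelevant to the paper.
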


\subsection{Derivation of the perturbation equation near $W$.}
Let $u$ be the solution to the equation in \eqref{nlw}. Let $v=u-W$,
then $v$ satisfies the equation
\begin{align}\label{dif-eq}
v_{tt}-\Delta v=|v+W|^{p_c-1}(v+W)-W^{p_c}.
\end{align}
Let $R(v)=|v+W|^{p_c-1}(v+W)-p_cW^{p_c-1}v-W^{p_c}$ and $\mathcal L
v=(-\Delta-p_cW^{p_c-1})v$, then \eqref{dif-eq} can be written
equivalently as
\begin{align}
v_{tt}-\Delta v&=R(v)+p_c W^{p_c-1}v,\label{rv-eq}\\
v_{tt}+\mathcal L v&=R(v).\label{lv-eq}
\end{align}
We record the following spectral properties of $\mathcal L$ from
\cite{duck-merle:wave}.
\begin{lem}[Spectral property] \label{spe-l}
The operator $\mathcal L$ has no positive eigenvalue and a unique
negative eigenvalue $-e_0^2$ with the corresponding eigenfunction
$\mathcal Y\in \mathcal S(\R^d)$, i.e.
\begin{align*}
\mathcal L\mathcal Y=-e_0^2 \mathcal Y.
\end{align*}
For convenience of notations, we will assume $e_0>0$.
\end{lem}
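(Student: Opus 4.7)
The plan is to view $\mathcal{L} = -\Delta - V$ with $V := p_c W^{p_c-1}$, a smooth radial potential satisfying $V(x) = O(\langle x\rangle^{-4})$. Since $V$ is relatively compact with respect to $-\Delta$ on $L^2(\R^d)$, Weyl's theorem gives $\sigma_{\mathrm{ess}}(\mathcal{L}) = [0,\infty)$, and the Birman--Schwinger / CLR bounds (valid because $V \in L^{d/2}$ for $d \ge 3$) guarantee only finitely many negative eigenvalues. Embedded positive eigenvalues are excluded by Kato's theorem on short-range Schr\"odinger operators, since $V$ decays much faster than $|x|^{-1}$.

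To pin down the negative spectrum I would use the spherical-harmonic decomposition $L^2(\R^d) = \bigoplus_{\ell \ge 0} \mathcal{H}_\ell$, on each summand of which $\mathcal{L}$ reduces to a radial Sturm--Liouville operator $L_\ell$ carrying an added centrifugal term $\ell(\ell+d-2)/r^2$; the quadratic form $\langle L_\ell f, f\rangle$ is then monotone increasing in $\ell$, so it suffices to analyze $\ell = 0$ and $\ell = 1$. For $\ell = 1$ the translation derivatives $\partial_i W \in \ker\mathcal{L}$ have common radial profile $\partial_r W$, which is $O(r)$ at the origin, $L^2$ at infinity, and strictly negative on $(0,\infty)$; Perron--Frobenius (equivalently, Sturm oscillation) then identifies it as the ground state of $L_1$, forcing $\inf\sigma(L_1) = 0$ and hence $L_\ell \ge 0$ for all $\ell \ge 1$. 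For $\ell = 0$, the scaling generator $\Lambda W := \frac{d-2}{2} W + x\cdot\nabla W$ satisfies $\mathcal{L}\Lambda W = 0$ (obtained by differentiating $\Delta W_\lambda + W_\lambda^{p_c} = 0$ at $\lambda = 1$) and, for $d \ge 6$, lies in $L^2$; a direct expansion of $W(x) = (1 + |x|^2/(d(d-2)))^{-(d-2)/2}$ gives $\Lambda W(0) = (d-2)/2 > 0$ and $\Lambda W(x) \sim -\frac{d-2}{2}(d(d-2))^{(d-2)/2}|x|^{-(d-2)}$ at infinity, so this zero-mode changes sign. Perron--Frobenius therefore forces the true radial ground state to have strictly smaller (hence negative) eigenvalue and to be simple, yielding exactly one simple negative eigenvalue $-e_0^2$ for $\mathcal{L}$ with a radial positive eigenfunction $\mathcal{Y}$.

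Finally, $\mathcal{Y} \in \mathcal{S}(\R^d)$ follows from elliptic bootstrap (since $V$ is smooth and bounded, any $L^2$ eigenfunction is smooth and lies in every $H^k$) together with Agmon exponential-decay estimates, which apply because $-e_0^2$ sits strictly below $\sigma_{\mathrm{ess}}(\mathcal{L})$ and $V \to 0$ at infinity. The main technical obstacle will be the careful setup of the Perron--Frobenius / Sturm oscillation framework in each angular sector: one has to identify the correct self-adjoint realization on $(0,\infty)$, verify that $\partial_r W$ and $\Lambda W$ lie in the appropriate form domains with the right boundary behavior at $r = 0$, and extract uniqueness of the sign-definite ground state. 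Once this machinery is in place, the counting above pins down $-e_0^2$ as the unique, simple negative eigenvalue of $\mathcal{L}$.
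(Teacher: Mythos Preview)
The paper does not actually prove this lemma; it is simply recorded from \cite{duck-merle:wave} without argument. Your outline is the standard route (and essentially what one finds in Duyckaerts--Merle and the surrounding literature): Weyl's theorem for the essential spectrum, Kato's theorem to rule out embedded positive eigenvalues, and a spherical-harmonic reduction combined with Sturm--Liouville/Perron--Frobenius to count the negative spectrum. The regularity and Schwartz decay of $\mathcal{Y}$ via elliptic bootstrap plus Agmon estimates are also the standard ingredients.

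One point deserves tightening. From $\Lambda W(0)>0$ and $\Lambda W(x)<0$ for large $|x|$ you only conclude that $\Lambda W$ has \emph{at least} one radial node, which gives existence of a negative eigenvalue below it but not uniqueness. To pin down that there is \emph{exactly one} negative eigenvalue in the radial sector you need $\Lambda W$ to be the first excited state of $L_0$, i.e.\ to have exactly one node. This is an easy explicit computation: writing $a=d(d-2)$ one checks
\[
\Lambda W(r)=\tfrac{d-2}{2}\Bigl(1+\tfrac{r^2}{a}\Bigr)^{-d/2}\Bigl(1-\tfrac{r^2}{a}\Bigr),
\]
so the unique zero is at $r=\sqrt{d(d-2)}$. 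With this in hand, Sturm oscillation identifies $0$ as the second radial eigenvalue and hence $-e_0^2$ as the unique (simple) negative one. Everything else in your plan is sound.
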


\section{The existence of $W^-$, $W^+$.}

As in \cite{duck-merle:wave}, the threshold solutions $W^-$, $W^+$
are constructed as the limit of a sequence of near solutions
$W_k^a(t,x)$. On the other hand, the asymptotic behaviors of $W^{-}$ and $W^{+}$ are quite different
in the negative time direction (see Remark \ref{rem_revadd}). We need the following result:
\begin{lem}[\cite{duck-merle:wave}] \label{lemma:wka}
Let $a\in\R$ and let $e_0>0$ be the same as in Lemma \ref{spe-l}. There exist functions
$\{\Phi_j^a\}_{j\ge 1}$ in $\mathcal S(\R^d)$ such that
$\Phi_1^a=a\mathcal Y$(see Lemma \ref{spe-l} for the definition of
$\mathcal Y$) and if
$$
W_k^a(t,x)=W(x)+\sum_{j=1}^k e^{-je_0t}\Phi_j^a (x),
$$
then as $t\to\infty$,
\begin{align}\label{near}
\eps_k^a:=(\partial_{tt} -\Delta)
W_k^a-|W_k^a|^{p_c-1}W_k^a=O(e^{-(k+1)e_0t}),\mbox{ in } \mathcal
S(\R^d).
\end{align}
 More precisely, $\forall J, M \ge 0$, $J,M$ are integers, there exists
a constant $C_{J,M}$ such that
$$
\langle x\rangle^M|\nabla^J\eps_k^a(t,x)|\le C_{J,M}e^{-(k+1)e_0t},
$$
for all $t$ sufficiently large.
\end{lem}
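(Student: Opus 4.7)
The plan is to construct the functions $\Phi_j^a$ recursively by matching coefficients of $e^{-je_0 t}$ in the equation, then to bound the leftover Taylor tail. Set $v_k := W_k^a - W = \sum_{j=1}^k e^{-je_0 t}\Phi_j^a$. Since each $\Phi_j^a$ will be Schwartz and $W(x) \gtrsim \langle x\rangle^{-(d-2)}$, one obtains the pointwise bound $|v_k(t,x)/W(x)| \leq C e^{-e_0 t}$; for $t$ sufficiently large this is $\leq 1/2$, so $W + v_k > 0$ and $|W_k^a|^{p_c-1}W_k^a = (W+v_k)^{p_c}$ admits the absolutely convergent binomial expansion
\begin{align*}
(W+v_k)^{p_c} = W^{p_c} + \sum_{n\geq 1}\binom{p_c}{n}W^{p_c-n}v_k^n.
\end{align*}
Using $-\Delta W = W^{p_c}$ and substituting into $\eps_k^a = (\partial_{tt}-\Delta)W_k^a - |W_k^a|^{p_c-1}W_k^a$, the requirement that the coefficient of $e^{-je_0 t}$ vanish for each $j \in \{1,\dots,k\}$ produces the cascade
\begin{align*}
(\mathcal L + j^2 e_0^2)\Phi_j^a = F_j^a(\Phi_1^a,\dots,\Phi_{j-1}^a),
\end{align*}
where $\mathcal L = -\Delta - p_c W^{p_c-1}$ and $F_j^a$ is a finite polynomial in the lower-order $\Phi_i^a$ with coefficients of the form $\binom{p_c}{n}W^{p_c-n}$ for $2 \leq n \leq j$ (in particular $F_1^a = 0$).

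For the inductive construction, the base case $j=1$ is immediate from Lemma \ref{spe-l}: take $\Phi_1^a = a\mathcal Y \in \mathcal S(\R^d)$. For $j \geq 2$, the same lemma ensures $-j^2 e_0^2$ lies in the resolvent set of $\mathcal L$, so $\mathcal L + j^2 e_0^2$ is invertible on $L^2$. Assuming inductively that $\Phi_1^a,\dots,\Phi_{j-1}^a \in \mathcal S$, we have $F_j^a \in \mathcal S$ because it is a product of Schwartz functions against the at-worst polynomially growing weights $W^{p_c-n}$. To conclude $\Phi_j^a \in \mathcal S$, I would combine elliptic regularity (yielding $\Phi_j^a \in H^s$ for every $s$) with a decay bootstrap: rewriting the equation as $(-\Delta + j^2 e_0^2)\Phi_j^a = F_j^a + p_c W^{p_c-1}\Phi_j^a$ and convolving with the exponentially decaying Green's function of $-\Delta + j^2 e_0^2$ produces arbitrary polynomial (in fact exponential) decay of $\Phi_j^a$, which together with the $H^s$ bounds gives rapid decay of all derivatives.

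Finally, with $\Phi_1^a,\dots,\Phi_k^a$ in hand the residual $\eps_k^a$ is exactly the sum of those terms in the Taylor expansion of order $e^{-me_0 t}$ with $m \geq k+1$. These split into (i) a finite collection of products $e^{-me_0 t}W^{p_c-n}\prod_i \Phi_{j_i}^a$ with $2 \leq n \leq k$ and $m = \sum_i j_i \geq k+1$, and (ii) the infinite tail $\sum_{n \geq k+1}\binom{p_c}{n}W^{p_c-n}v_k^n$. Piece (i) is bounded in any weighted $C^J$ norm by $C_{J,M}e^{-(k+1)e_0 t}\langle x\rangle^{-M}$ using the Schwartz decay of the $\Phi_{j_i}^a$ against the polynomial growth of $W^{p_c-n}$. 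For piece (ii), I would factor $W^{p_c-n}v_k^n = W^{p_c}(v_k/W)^n$ and sum the geometric series in $|v_k/W| \leq Ce^{-e_0 t}$ to obtain a bound of the form $CW^{p_c}(x)e^{-(k+1)e_0 t}$; derivatives are handled by the Leibniz rule. The main technical obstacle is controlling this infinite tail uniformly in $x$: the essential point is the pointwise comparison $|v_k(t,\cdot)| \leq W/2$, which uses the Schwartz decay of each $\Phi_j^a$ beating the polynomial decay of $W$; a merely $L^\infty$-small $v_k$ would not make $(v_k/W)^n$ summable over $n$.
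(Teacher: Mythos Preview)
The paper does not actually prove this lemma; it is quoted from \cite{duck-merle:wave} and used as a black box. Your proposal is the standard and correct construction (and is essentially what appears in the cited reference): build the $\Phi_j^a$ recursively by matching the coefficient of $e^{-je_0 t}$, invert $\mathcal L + j^2 e_0^2$ on $L^2$ for $j\ge 2$ (using that $-e_0^2$ is the only negative eigenvalue and the essential spectrum of $\mathcal L$ is $[0,\infty)$), bootstrap to Schwartz via elliptic regularity plus the exponentially decaying Green's function of $-\Delta + j^2 e_0^2$, and then control the Taylor tail by the geometric bound $|v_k/W|\le Ce^{-e_0 t}<1/2$.

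One small remark on your write-up: when you say ``the same lemma ensures $-j^2 e_0^2$ lies in the resolvent set,'' Lemma~\ref{spe-l} as stated only asserts uniqueness of the negative eigenvalue; you are implicitly also using that the essential spectrum of $\mathcal L=-\Delta - p_c W^{p_c-1}$ is $[0,\infty)$ (Weyl's theorem, since $W^{p_c-1}$ is a relatively compact perturbation of $-\Delta$), so that any point strictly below $0$ which is not an eigenvalue is automatically in the resolvent set. It would be worth making this explicit.
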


\begin{rem}\label{rm:provk}
Since all $\Phi_j$ are Schwartz functions, we have the following
properties for the difference
\begin{align}\label{vk-defn}
v_k=W_k^a-W=\sum_{j=1}^k e^{-je_0t}\Phi_j^a(x).
\end{align}
For any $j,l\ge 0$ and $1\le p\le \infty$, there exists $C_{k,j,l}>0$ such that
\begin{align}
|\langle x\rangle^j \nabla^l v_{k}(t,x)|&\le C_{k,j,l}e^{- {e_0}
t}.\label{proper-vk}\\
\|\langle \cdot \rangle^j \nabla^l v_k(t)\|_{p}&\le C_{k,j,l}
e^{-e_0 t}.\label{lp-vk}
\end{align}
\end{rem}

\vspace{0.2cm}

Next we show that there exists a unique genuine solution $W^a(t,x)$ of
 \eqref{nlw} which can be approximated by the above constructed near
 solutions $W_k^a(t,x)$. The existence and uniqueness of the solution $W^a$ is
  transformed to that of $h:=W^a-W_k^a$ which satisfies the equation
\begin{equation}\label{equu-h}
\partial_{tt} h-\Delta h=p_cW^{p_c-1}h+R(h+v_k)-R(v_k)-\eps_k^a.
\end{equation}
Like for the Schr\"odinger equation \cite{lz:schrodinger}, we will
construct the solution to \eqref{equu-h} by using fixed point
argument in the weighted Sobolev space $H^{m,m}$. The reason is that
the nonlinearity in \eqref{equu-h} can be shown to be Lipschitz
continuous while the space used in \cite{duck-merle:wave} does not
work for higher dimensions.

\begin{prop}\label{ex-hmm}
Let $a\in \R $. Let $\mathcal Y$ and $W_k^a=W_k^a(t,x)$ be the same
as in Lemma \ref{lemma:wka}. Assume $m\ge 3d$ is fixed. Then there
exists $k_0>0$ and a unique solution $W^a(t,x)$ for the equation in
\eqref{nlw} which satisfies the following: for any $k\ge k_0$, there
exists $t_k\ge 0$ such that $\forall \ t\ge t_k$,
\begin{equation}\label{goal}
\|W^a(t)-W_k^a(t)\|_{H^{m,m}}\le e^{-(k+\frac 12)e_0 t}.
\end{equation}
Moreover, we have
\begin{equation}\label{extra-fir}
\|\nabla
(W^a(t)-W_k^a)\|_{H^{m,m}}+\|\partial_t(W^a(t)-W_k^a(t)\|_{H^{m,m}}\le
e^{-(k+\frac 12)e_0t}.
\end{equation}
\end{prop}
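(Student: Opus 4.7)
The plan is to recast \eqref{equu-h} as an integral equation via Duhamel with terminal data at $t=+\infty$, and solve it by Banach fixed point in the exponentially weighted space $\Sigma_{t_0}$ introduced in Lemma \ref{sigma-hmm}, with weight $\alpha=(k+\tfrac12)e_0$. Concretely, I would set
$$
\Phi(h)(t)=-\int_t^\infty \frac{\sin((t-\tau)\sqrt{-\Delta})}{\sqrt{-\Delta}}\bigl[p_c W^{p_c-1}h+R(h+v_k)-R(v_k)-\eps_k^a\bigr](\tau)\,d\tau,
$$
look for a fixed point $h$ in the unit ball of $\Sigma_{t_0}$, and define $W^a:=W_k^a+h$. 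A bound $\|h\|_{\Sigma_{t_0}}\le 1$ then translates directly into the desired \eqref{goal}.

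Applying Lemma \ref{sigma-hmm} reduces the entire scheme to time-pointwise $H^{m,m}$ estimates on the three summands of the source. The forcing $\eps_k^a$ is the simplest: by Lemma \ref{lemma:wka}, $\|\eps_k^a(\tau)\|_{H^{m,m}}\lesssim e^{-(k+1)e_0\tau}$, so $\|\eps_k^a\|_{\Sigma_{t_0}}\lesssim e^{-e_0 t_0/2}$, which is arbitrarily small for large $t_0$. The linear piece $p_c W^{p_c-1}h$ is controlled via the bilinear estimate (Lemma \ref{bilinear}) using $W^{p_c-1}\in W^{m,\infty}$, giving $\|p_c W^{p_c-1}h\|_{\Sigma_{t_0}}\lesssim \|h\|_{\Sigma_{t_0}}$; since Lemma \ref{sigma-hmm} supplies the prefactor $(\alpha-C)^{-1}$, taking $k$ large (hence $\alpha$ large) prevents this term from destroying the contraction.

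The heart of the matter, and the reason for working in $H^{m,m}$ rather than a Strichartz space, is the nonlinear term $R(h+v_k)-R(v_k)$. In high dimensions $p_c-1=\tfrac{4}{d-2}\le 1$, so $F(u)=|u|^{p_c-1}u$ is not $C^2$ near the origin and a direct Taylor expansion of $R$ in $\dot S^1$-type norms fails. However, by Lemma \ref{lem_embed_208}, for $m\ge 3d$ a function with small $H^{m,m}$ norm has very strong pointwise decay, so for $t_0$ large and $\|h\|_{\Sigma_{t_0}}\le 1$ one can arrange $|h(\tau,x)|+|v_k(\tau,x)|<W(x)/2$ everywhere; on that range $F$ is smooth and the mean value theorem gives
$$
R(h+v_k)-R(v_k)=h\int_0^1\bigl(F'(W+v_k+sh)-F'(W)\bigr)\,ds=O\bigl(W^{p_c-2}(|v_k|+|h|)|h|\bigr).
$$
When $d>6$ the factor $W^{p_c-2}$ grows like $|x|^{d-6}$ at infinity, but this polynomial growth is absorbed by the $H^{m,m}$-weight on $v_k$ and $h$: Lemma \ref{bilinear} handles the cross term (using $\|W^{p_c-2}v_k\|_{W^{m,\infty}}\lesssim e^{-e_0\tau}$, which follows from the Schwartz-in-$x$ decay \eqref{proper-vk} of $v_k$), and Lemma \ref{multi} handles the quadratic $W^{p_c-2}h^2$ term (choosing $\langle x\rangle^{2C}\gtrsim W^{p_c-2}$ with $2C\ge d-6$), yielding schematically $\|R(h+v_k)-R(v_k)\|_{\Sigma_{t_0}}\lesssim (e^{-e_0 t_0}+\|h\|_{\Sigma_{t_0}})\|h\|_{\Sigma_{t_0}}$.

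Combining the three bounds and running the identical analysis on the difference $R(h_1+v_k)-R(h_2+v_k)$ shows that $\Phi$ is a contraction on the unit ball of $\Sigma_{t_0}$ once $k$ and $t_0=t_k$ are chosen large enough; the Banach fixed point theorem then produces the unique $h$ satisfying \eqref{goal}, and uniqueness of $W^a$ follows from uniqueness of the fixed point. For the auxiliary estimate \eqref{extra-fir} I would differentiate the Duhamel representation, apply $\nabla$ and $\partial_t$ inside the integral, and use the propagator bound \eqref{lin-hmm1} together with the same nonlinear estimates to propagate the $e^{-(k+1/2)e_0 t}$ decay to $\nabla h$ and $\partial_t h$. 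The main obstacle throughout is not the fixed-point mechanism itself but verifying that the pointwise bound $W^{p_c-2}(|v_k|+|h|)|h|$ — which would be useless in Strichartz spaces — actually closes in $H^{m,m}$; this is precisely where the large-$m$ embedding together with Lemmas \ref{bilinear} and \ref{multi} is indispensable.
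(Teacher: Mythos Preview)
Your overall plan coincides with the paper's: Banach fixed point for the Duhamel-at-$+\infty$ map $\Phi$ in the unit ball of $\Sigma_{t_0}$ with $\alpha=(k+\tfrac12)e_0$; the treatment of $\eps_k^a$ and of the linear piece $p_cW^{p_c-1}h$ via Lemmas~\ref{sigma-hmm} and~\ref{bilinear}, as well as the derivation of \eqref{extra-fir} by differentiating the Duhamel formula, are exactly as in the paper.

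The gap is in the nonlinear estimate. Your mean-value argument yields only the \emph{pointwise} bound $|R(h+v_k)-R(v_k)|\lesssim W^{p_c-2}(|v_k|+|h|)|h|$, and a pointwise inequality does not control $\|\cdot\|_{H^{m,m}}$, which involves $m$ derivatives. Lemmas~\ref{bilinear} and~\ref{multi} apply to expressions that are \emph{literally} of the form $fg$ or $\langle x\rangle^{Cj}h^j$, not to anything merely dominated by such a product; differentiating $h\int_0^1\bigl(F'(W+v_k+sh)-F'(W)\bigr)\,ds$ up to order $m$ produces factors $F^{(\ell+1)}(W+\cdots)\sim W^{p_c-\ell-1}\sim\langle x\rangle^{(\ell+1-p_c)(d-2)}$ for every $\ell\le m$, and each of these must be absorbed individually. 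The paper closes this by using $|h+v_k|\le\tfrac34 W$ to expand the real-analytic function $|1+s|^{p_c-1}(1+s)$ as a convergent power series, obtaining
\[
R(h+v_k)-R(v_k)=\sum_{j\ge2}\sum_{i=1}^{j}a_j\binom{j}{i}\,W^{p_c-j}\,v_k^{\,j-i}\,h^{\,i},
\]
an explicit sum of products to which Lemma~\ref{bilinear} (for the $i=1$ terms, with multiplier $W^{-j}v_k^{j-1}\in W^{m,\infty}$) and Lemma~\ref{multi} (for $i\ge2$, applied to $W^{-i}h^i$) apply directly; convergence of the series is ensured by the factors $e^{-(j-1)e_0 t_k}$. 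Your two ``schematic'' terms are precisely the $j=2$ summands of this expansion, so your intuition is correct, but the passage from the pointwise picture to an honest $H^{m,m}$ bound is exactly the step that needs justification, and the power-series expansion is the mechanism the paper uses to supply it. A second, minor point: ``uniqueness of $W^a$ follows from uniqueness of the fixed point'' is not quite enough, since a priori the fixed point could depend on $k$; the paper checks separately that the solutions obtained for different $k$ agree, by verifying that the $k_2$-solution also satisfies the $k_1$-bound for $t$ large and then invoking uniqueness for \eqref{nlw}.
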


\begin{proof} Let $h=W^a-W_k^a$, then $W^a$ is the solution of
\eqref{nlw} as long as $h$ is a solution of the equation
\eqref{equu-h}. By Duhamel's formula, the existence of the solution
to \eqref{equu-h} which satisfies the decay condition \eqref{goal},
\eqref{extra-fir} is transformed into the existence of the solution
to the following integral equation for large time $t$,
\begin{align}\label{integral-eq}
h(t)&=-\int_t^{\infty}\frac{\sin((t-\tau)\sqrt{-\Delta})}{\sqrt{-\Delta}}
(p_cW^{p_c-1}h+R(h+v_k)-R(v_k)-\eps_k^a)(\tau) d\tau\\
&:=\Phi(h)(t)
\end{align}
Define the space $\Sigma_{t_k}$ to be the space with the norm
\begin{align*}
\|f\|_{\Sigma_{t_k}}=\sup_{t\ge t_k}e^{\alpha t}\|f(t)\|_{H^{m,m}},
\ \alpha=(k+\frac 12)e_0,
\end{align*}
and the unit ball
\begin{align*}
B_k=\{f=f(t,x);\ \|f\|_{\Sigma_{t_k}}\le 1\}.
\end{align*}
We will show $\Phi$ is a contraction on $B_k$. Taking $h\in B_k$, we
compute the $H^{m,m}$ norm of $\Phi(h)(t)$ as follows
\begin{align}
\|\Phi(h)(t)\|_{H^{m,m}} &\le p_c
\int_t^{\infty}\biggl\|\frac{\sin((t-\tau)\sqrt{-\Delta})}{\sqrt{-\Delta}}W^{p_c-1}
h(\tau)\biggr\|_{H^{m,m}} d\tau\label{256}\\
&+\int_t^{\infty}\biggl\|\frac{\sin((t-\tau)\sqrt{-\Delta})}{\sqrt{-\Delta}}(R(h+v_k)-R(v_k))
(\tau)\biggr\|_{H^{m,m}} d\tau\label{257}\\
&+\int_t^{\infty}\biggl\|\frac{\sin((t-\tau)\sqrt{-\Delta})}{\sqrt{-\Delta}}\eps_k^a
(\tau)\biggr\|_{H^{m,m}} d\tau\label{258}.
\end{align}
To estimate \eqref{256}, we use Lemma \ref{lin-hmm}, Lemma
\ref{bilinear} to get
\begin{align}
\eqref{256}&\lsm \int_t^{\infty} e^{C|t-\tau|}\|W^{p_c-1}h(\tau)\|_{H^{m,m}}d\tau\label{305}\\
&\lesssim \int_t^{\infty}
e^{C|t-\tau|}\|W^{p_c-1}\|_{W^{m,\infty}}\|h(\tau)\|_{H^{m,m}}d\tau \notag \\
&\lesssim \int_t^{\infty} e^{C(\tau-t)}e^{-\alpha
\tau}\|h\|_{\Sigma_{t_k}}
d\tau\notag\\
&\lesssim e^{-Ct}\|h\|_{\Sigma_{t_k}}\int_{t}^{\infty}e^{-(\alpha-C)\tau}d\tau \notag\\
&\lesssim  \frac 1{\alpha-C} e^{-\alpha t}\|h\|_{\Sigma_{t_k}}.\notag
\end{align}
Since $\alpha=(k+\frac 12)e_0$, by taking $k_0$ sufficient
large, we have
\begin{equation}
\eqref{256}\le \frac 1{100} e^{-\alpha
t}\|h\|_{\Sigma_{t_k}}\le\frac 1{100} e^{-\alpha
t}\label{linear-est}
\end{equation}
for all $k\ge k_0$.

Now we deal with \eqref{258}. Note that by Lemma \ref{lemma:wka},
$\eps_k^a(t)=O(e^{-(k+1)e_0t})$ in $\mathcal S(\R^d)$. This implies
\begin{align*}
\|\eps_k^a(t)\|_{H^{m,m}}\le C_k e^{-(k+1)e_0t}.
\end{align*}
Thus,
\begin{align}
\eqref{258}&\le \int_t^{\infty}
e^{C|t-\tau|}\|\eps_k^a(\tau)\|_{H^{m,m}}d\tau\label{eps-est}\\
&\le C_k \int_t^{\infty} e^{C|t-\tau|}e^{-(k+1)e_0\tau}ds\tau\notag\\
&\le  C_k\frac {e^{-\frac 12 e_0 t}} {(k+1)e_0-C}e^{-(k+\frac
12)e_0t}\le \frac 1{100}e^{-\alpha t},\notag
\end{align}
if $t\ge t_k$ and $t_k$ is sufficiently large.

It remains to estimate \eqref{257}. The reason that we can take $m$
derivatives is that both $v_k$ and $h$  are small compared to $W$.
Indeed by Remark \ref{rm:provk}, we have
\begin{equation}\label{vsmall}
|v_k(t,x)|<\frac 12 W(x), \quad \forall \ t\ge t_k, \ x\in \R^d.
\end{equation}
Moreover, since $h\in \Sigma_{t_k}$ and $m\ge 3d$, by Lemma \ref{lem_embed_208} we have
\begin{align*}
 &\|\langle x\rangle ^{d-2} h(t)\|_{\infty} \\
\lesssim & \|h(t)\|_{H^{m,m}} \le e^{-\alpha t}\|h\|_{\Sigma_{t_k}}.
\end{align*}
As a consequence, we have
\begin{equation}\label{hsmall}
|h(t,x)|\lesssim e^{-\alpha t} \langle
x\rangle^{-(d-2)}\|h\|_{\Sigma_{t_k}}\le \frac 14 W(x).
\end{equation}
Using \eqref{vsmall} and \eqref{hsmall} together with the expansion
for the real analytic function $P(s)=|1+s|^{p_c-1}(1+s)$ for $|s|\le
\frac 34$ which takes the form
\begin{align}\label{exp-pz}
P(s)=1+p_c s+\sum_{j\ge 2} a_j s^j, \ |a_j|\lsm 1,
\end{align}
  we write
\begin{align}
&R(v_k+h)-R(v_k)\label{1form}\\
&=W^{p_c}\biggl(|1+\frac{v_k+h}W|^{p_c-1}(1+\frac{v_k+h}{W})
-|1+\frac{v_k}W|^{p_c-1}(1+\frac{v_k}W)-p_c\frac
hW\biggr)\notag\\
&=\sum_{\substack {j\ge 2\\1\le i\le j}}a_jC_{i,j}
W^{p_c-j}v_k^{j-i}h^i,\notag
\end{align}
where $C_{i,j}=\frac{j!}{i!(j-i)!}\le 2^j$. By triangle inequality and
Lemma \ref{bilinear}, we estimate the $H^{m,m}$-norm of \eqref{1form} as follows:
\begin{align*}
&\|R(v_k+h)(t)-R(v_k)(t)\|_{H^{m,m}}\\
&\lsm \sum_{\substack{j\ge 2\\1\le
i\le j}}2^j\|W^{p_c-j}v_k(t)^{j-i}h(t)^i\|_{H^{m,m}}\\
&\lsm \sum_{j\ge
2}2^j\|W^{-j}v_k(t)^{j-1}h(t)\|_{H^{m,m}}+\sum_{\substack{j\ge
2\\2\le i\le j}}2^j\|(W^{-1}v_k(t))^{j-i}W^{-i}h(t)^i\|_{H^{m,m}}\\
&\lsm \sum_{j\ge
2}2^j\|W^{-j}v_k(t)^{j-1}\|_{W^{m,\infty}}\|h(t)\|_{H^{m,m}}+
\sum_{\substack{j\ge 2\\2\le i\le j}}2^j
\|(W^{-1}v_k(t))^{j-i}\|_{W^{m,\infty}}\|W^{-i}h^i\|_{H^{m,m}}.
\end{align*}
Here we have split the sum in the index $i$ because our Lemma
\ref{multi} requires $i\ge 2$.
Now by Remark \ref{rm:provk} and Lemma \ref{multi} we have
\begin{align*}
\|W^{-i}h^i(t)\|_{H^{m,m}}&\lsm i^m\|h(t)\|_{H^{m,m}}^i,\\
\|W^{-j}v_k(t)^{j-1}\|_{W^{m,\infty}}&\lsm j^m
C_{k}e^{-(j-1)e_0t},\\
\|(W^{-1}v_k(t))^{j-i}\|_{W^{m,\infty}}&\lsm j^m
C_{k}e^{-(j-i)e_0t}.
\end{align*}
Note moreover that
\begin{align*}
\|h(t)\|_{H^{m,m}}\le e^{-\alpha t}\|h\|_{\Sigma_{t_k}},
\end{align*}
we have
\begin{align*}
\|R(v_k+h)(t)-R(v_k)(t)\|_{H^{m,m}}&\lsm \sum_{\substack{j\ge
2\\1\le i\le j}}2^j j^{2m}C_{k}e^{-(j-i)e_0t}(e^{-\alpha
t}\|h\|_{\Sigma_{t_k}})^i\\
&\lsm e^{-\alpha t}\|h\|_{\Sigma_{t_k}}\sum_{\substack{j\ge 2\\1\le
i\le j}}2^j j^{2m}C_{k}e^{(-\alpha(i-1)-(j-i)e_0)t}\\
&\lsm e^{-\alpha t}\|h\|_{\Sigma_{t_k}}\sum_{\substack{j\ge 2\\1\le
i\le j}}2^j j^{2m}C_{k}e^{-(j-1)e_0t_k},
\end{align*}
where in the second inequality we have dropped the term
$\|h\|_{\Sigma_{t_k}}^{i-1}$ since $\|h\|_{\Sigma_{t_k}} \le 1$.
Obviously the last series can be made arbitrarily small if we
choose $t_k$ sufficiently large. This gives us
\begin{align*}
\|R(v_k+h)(t)-R(v_k)(t)\|_{H^{m,m}}\le \frac 1{100}e^{-\alpha t}
\|h\|_{\Sigma_{t_k}}.
\end{align*}
Applying this estimate and Lemma \ref{lin-hmm} we have
\begin{align}
\eqref{257}&\le \int_t^{\infty}
e^{C|t-\tau|}\|R(h+v_k)(\tau)-R(v_k)(\tau)\|_{H^{m,m}} d\tau\label{rest}\\
&\le \frac 1{100}\int_t^{\infty}
e^{C|t-\tau|}e^{-\alpha\tau}\|h\|_{\Sigma_{t_k}} d\tau\notag\\
&\le \frac 1{100} e^{-\alpha t} \|h\|_{\Sigma_{t_k}}.\notag
\end{align}
Collecting the estimates \eqref{linear-est}, \eqref{eps-est} and
\eqref{rest} we obtain
\begin{align*}
\|\Phi(h)(t)\|_{H^{m,m}}\le \frac 1{10} e^{-\alpha
t}\|h\|_{\Sigma_{t_k}}
\end{align*}
for all $t\ge t_k$, $k\ge k_0$. Therefore
\begin{align*}
\|\Phi(h)(t)\|_{\Sigma_{t_k}}\le \frac 1{10},
\end{align*}
which shows that $\Phi$ maps $B_k$ to itself. To show $\Phi$ is a
contraction, we choose $h_1$, $h_2\in B_k$ and estimate
\begin{align}
\|\Phi(h_1)(t)&-\Phi(h_2)(t)\|_{H^{m,m}}\notag\\
&\le p_c
\int_t^{\infty}\biggl\|\frac{\sin((t-\tau)\sqrt{-\Delta})}{\sqrt{-\Delta}}
W^{p_c-1}(h_1-h_2)(\tau)\biggr\| d\tau\label{diff-lin}\\
&+\int_t^{\infty}\biggl\|\frac{\sin((t-\tau)\sqrt{-\Delta})}{\sqrt{-\Delta}}
(R(h_1+v_k)(\tau)-R(h_2+v_k)(\tau))\biggr\| d\tau.\label{diff-r}
\end{align}
The estimate of \eqref{diff-lin} is the same as \eqref{256}, we
get
\begin{align*}
\eqref{diff-lin}\le \frac 1{100} e^{-\alpha
t}\|h_1-h_2\|_{\Sigma_{t_k}},\ \ \forall \ k\ge k_0.
\end{align*}
To estimate \eqref{diff-r}, we use \eqref{exp-pz} to write
\begin{align*}
R(h_1+v_k)-R(h_2+v_k)
&=W^{p_c} \Bigl( |1+\frac{h_1+v_k}W|^{p_c-1}(1+\frac{h_1+v_k}W) \Bigr.\\
&\qquad\qquad \Bigl. -|1+\frac{h_2+v_k}W|^{p_c-1}(1+\frac{h_2+v_k}W)-p_cW^{p_c-1}(h_1-h_2) \Bigr)\\
&=\sum_{j\ge 2} a_j W^{p_c-j}((h_1+v_k)^j-(h_2+v_k)^j)\\
&=\sum_{\substack{j\ge 2\\1\le i\le j}}a_jC_{i,j}
(h_2+v_k)^{j-i}(h_1-h_2)^i.
\end{align*}
With minor changes, this term can be treated in the same manner as
\eqref{1form}, so we have
\begin{align*}
\eqref{diff-r}\le \frac 1{100} e^{-\alpha
t}\|h_1-h_2\|_{\Sigma_{t_k}},\ \forall\ k\ge k_0,\ t\ge t_k.
\end{align*}
Therefore,
\begin{align*}
\|\Phi(h_1)(t)-\Phi(h_2)(t)\|_{H^{m,m}}&\le \frac 1{10} e^{-\alpha
t}\|h_1-h_2\|_{\Sigma_{t_k}}, \ \forall\ k\ge k_0,\ t\ge t_k.\\
\|\Phi(h_1)-\Phi(h_2)\|_{\Sigma_{t_k}}&\le \frac
1{10}\|h_1-h_2\|_{\Sigma_{t_k}}.
\end{align*}
This proves the map $\Phi$ is a contraction on $B_k$, hence there
exists a unique solution $h$ to the equation \eqref{equu-h} such
that
\begin{align}
\|h(t)\|_{H^{m,m}}\le e^{-\alpha t}, \ \forall \ t\ge t_k,\ k\ge
k_0.\label{h-estt}
\end{align}
Note $h=W^a-W_k^a$, this means that for any $k\ge k_0$, there exists
a unique solution $W^a(t)$ to the equation \eqref{nlw} on
$[t_k,\infty)$ such that
\begin{align*}
\|W^a(t)-W_k^a(t)\|_{H^{m,m}}\le e^{-(k+\frac 12)e_0t}.
\end{align*}
We need to show that $W^a(t,x)$ is independent of $k$. Indeed, let
$k_1<k_2$ and $W^a$, $\widetilde {W^a}$ be the corresponding
solutions such that
\begin{align*}
\|W^a(t)-W_{k_1}^a(t)\|_{H^{m,m}}\le e^{-(k_1+\frac 12)e_0 t},\ \forall\ t\ge t_{k_1},\\
\|\widetilde{W^a}(t)-W_{k_2}^a(t)\|_{H^{m,m}}\le e^{-(k_2+\frac
12)e_0 t},\ \forall \ t\ge t_{k_2}.
\end{align*}
Without loss of generality we also assume $t_{k_1}\le t_{k_2}$, then
the triangle inequality gives that
\begin{align*}
\|\widetilde{W^a}(t)-W_{k_1}^a(t)\|_{H^{m,m}}&\le \|\widetilde{
W^a}(t)-W_{k_2}^a(t)\|_{H^{m,m}}+\|\sum_{k_1< j\le k_2}e^{-je_0t}
\Phi_j\|_{H^{m,m}}\\
&\le e^{-(k_1+\frac 12)e_0 t},\  \ \forall\ t\ge t_{k_2}.
\end{align*}
Therefore $W^a(t)=\widetilde{ W^a}(t)$ on $[t_{k_2},\infty)$ and we
conclude $W^a\equiv\widetilde{ W^a}$ by uniqueness of solutions to
\eqref{nlw}. This shows that $W^a$ does not depend on $k$.

We finally verify \eqref{extra-fir}. Using Duhamel's formula
\eqref{integral-eq} and Lemma \ref{lin-hmm}, we have
\begin{align*}
&\|h_t(t)\|_{H^{m,m}}+\|\nabla h(t)\|_{H^{m,m}}\\
&\le \int_t^{\infty}\biggl
\|\partial_t\frac{\sin((t-\tau)\sqrt{-\Delta})}{\sqrt{-\Delta}}
(p_cW^{p_c-1}h(\tau)+R(h+v_k)(\tau)-R(v_k)(\tau)-\eps_k^a(\tau))\biggr\|_{H^{m,m}}
d\tau\\
&+ \int_t^{\infty}\biggl
\|\nabla\frac{\sin((t-\tau)\sqrt{-\Delta})}{\sqrt{-\Delta}}
(p_cW^{p_c-1}h(\tau)+R(h+v_k)(\tau)-R(v_k)(\tau)-\eps_k^a(\tau))\biggr\|_{H^{m,m}}
d\tau\\
&\le \int_t^{\infty} e^{C|t-\tau|}\|p_c W^{p_c-1}
h(\tau)+R(h+v_k)(\tau)-R(v_k)(\tau)-\eps_k^a(\tau)\|_{H^{m,m}}d\tau\\
&\le p_c\int_t^{\infty} e^{C|t-\tau|}\|W^{p_c-1}h(\tau)\|_{H^{m,m}}
d\tau\\
&\quad+\int_t^{\infty}
e^{C|t-\tau|}\|R(h+v_k)(\tau)-R(v_k)(\tau)\|_{H^{m,m}}d\tau\\
&\quad+\int_t^{\infty}e^{C|t-\tau|}\|\eps_k^a(\tau)\|_{H^{m,m}}
d\tau.
\end{align*}
These terms have been estimated before (see \eqref{256},
\eqref{257}, \eqref{258}). With the condition \eqref{h-estt}, we
have
\begin{align*}
\|h_t(t)\|_{H^{m,m}}+\|\nabla h(t)\|_{H^{m,m}}&\le e^{-(k+\frac
12)e_0 t}.
\end{align*}
The Proposition is proved.
\end{proof}

 \begin{cor}\label{prop-wa}
Let $k_0$ be the same as in Proposition \ref{ex-hmm}. Let
$v_{k_0}=\sum_{j=1}^{k_0}e^{-je_0 t}\phi_j(x)$ and $w^a=W^a-W$. Then
there exists $t_0>0$ such that for all $t\ge t_0$ and all $2\le
p\le\infty$
\begin{align}
\|\langle x\rangle^{l_1}\nabla^{l_2}(w^a(t)-v_{k_0}(t))\|_{L_x^p}
\lesssim e^{-(k_0+1)e_0 t} \le
e^{-(k_0+\frac 12)
e_0 t},\label{norm-wa}\\
\|\langle x\rangle^{l_1}\nabla^{l_2}w^a(t)\|_{L_x^p}
\lesssim e^{-e_0 t}
\le e^{-\frac 12
e_0 t},
\end{align}
as long as $l_1+l_2+\frac d2+1\le m$. In particular,
\begin{align*}
\|w^a-v_{k_0}\|_{\dot S^1([t,\infty))}\le e^{-k_0 e_0 t},\ \
\|w^a\|_{\dot S^1([t,\infty))}\le e^{-\frac 12 e_0 t}.
\end{align*}
\end{cor}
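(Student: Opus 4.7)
The plan is to derive all four estimates directly from the $H^{m,m}$ bounds produced by Proposition \ref{ex-hmm}. First observe that by the definitions of $w^a$, $v_{k_0}$ and $W_{k_0}^a$ one has the identity $w^a - v_{k_0} = W^a - W_{k_0}^a$, so the first pair of inequalities is really a statement about $W^a - W_{k_0}^a$.

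To obtain the sharp decay rate $e^{-(k_0+1)e_0 t}$ (rather than the $e^{-(k_0+1/2)e_0 t}$ directly provided by Proposition \ref{ex-hmm} at level $k_0$), I would apply Proposition \ref{ex-hmm} once more at level $k_0+1$, which by uniqueness of $W^a$ yields the sharper bound
\[
\|W^a - W_{k_0+1}^a\|_{H^{m,m}} \lesssim e^{-(k_0+3/2)e_0 t}.
\]
Combining this with the telescoping identity $W_{k_0+1}^a - W_{k_0}^a = e^{-(k_0+1)e_0 t}\Phi_{k_0+1}^a$ and the Schwartz nature of $\Phi_{k_0+1}^a$ gives $\|w^a - v_{k_0}\|_{H^{m,m}} \lesssim e^{-(k_0+1)e_0 t}$. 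The analogous bound for $\|\nabla(w^a-v_{k_0})\|_{H^{m,m}} + \|\partial_t(w^a-v_{k_0})\|_{H^{m,m}}$ follows in the same way from \eqref{extra-fir}.

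To convert to the weighted $L^p_x$ bounds I would use Lemma \ref{lem_embed_208} for the $p=\infty$ endpoint (which requires precisely the hypothesis $l_1+l_2+d/2+1 \le m$) and the definition of $H^{m,m}$ for the $p=2$ endpoint (which needs only $l_1+l_2 \le m$), then interpolate for $2 \le p \le \infty$. The bound on $w^a$ itself comes from writing $w^a = v_{k_0} + (w^a - v_{k_0})$: each summand of $v_{k_0}$ is a Schwartz function times $e^{-je_0 t}$, so Remark \ref{rm:provk} gives $\|\langle x\rangle^{l_1}\nabla^{l_2} v_{k_0}(t)\|_{L^p_x} \lesssim e^{-e_0 t}$ (with the $j=1$ term dominant), and the remainder $w^a - v_{k_0}$ is of strictly lower order.

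For the $\dot S^1$ bounds I would convert the pointwise-in-time $H^{m,m}$ bounds on $\nabla(w^a-v_{k_0})$ and $\partial_t(w^a-v_{k_0})$ into pointwise-in-time bounds in $\dot W_x^{1-\beta(r),r}$ and $\dot W_x^{-\beta(r),r}$ respectively, via Sobolev embedding; the choice $m \ge 3d$ is more than enough to handle every admissible pair $(q,r)$ uniformly (the negative-order norm is handled by Hardy--Littlewood--Sobolev starting from an $L^{r_1}_x$ estimate with $1/r_1 = 1/r + \beta(r)/d$). Taking the $L^q_s$-norm over $s \in [t,\infty)$ of a bound that decays like $e^{-(k_0+1/2)e_0 s}$ reproduces the same exponential in $t$, yielding $\|w^a-v_{k_0}\|_{\dot S^1([t,\infty))} \lesssim e^{-(k_0+1/2)e_0 t} \le e^{-k_0 e_0 t}$. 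Applying the same device to each summand of $v_{k_0}$ and summing yields $\|v_{k_0}\|_{\dot S^1([t,\infty))} \lesssim e^{-e_0 t}$, hence $\|w^a\|_{\dot S^1([t,\infty))} \lesssim e^{-e_0 t} \le e^{-e_0 t/2}$. The main technical nuisance is uniformity of the implied constants over admissible $(q,r)$, which is precisely what forces the generous choice of $m$ and the reliance on $H^{m,m}$ rather than a plain Strichartz space.
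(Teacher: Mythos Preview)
Your approach is correct and essentially the same as the paper's: derive the weighted $L^p_x$ bounds from the $H^{m,m}$ control via Lemma~\ref{lem_embed_208} and interpolation, handle the $\dot S^1$ norms by embedding $H^{m,m}$ into the relevant $\dot W^{1-\beta(r),r}_x$ and $\dot W^{-\beta(r),r}_x$ spaces (the paper does the negative-order piece by the Sobolev step $\|\,|\nabla|^{-\beta(r)}f\|_{L^r_x}\lesssim\|f\|_{L^{4rd/(r(d+1)+2d-2)}_x}$, exactly your Hardy--Littlewood--Sobolev idea), and then integrate in $t$.

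The one genuine difference is your telescoping step through level $k_0+1$. The paper's own proof applies Proposition~\ref{ex-hmm} only at level $k_0$ and therefore actually obtains $\|\langle x\rangle^{l_1}\nabla^{l_2}(w^a-v_{k_0})\|_{L^p_x}\lesssim e^{-(k_0+1/2)e_0 t}$, not the sharper $e^{-(k_0+1)e_0 t}$ written in the statement; this weaker rate is all that is used downstream (the $\dot S^1$ conclusions and every later application only need $\le e^{-k_0 e_0 t}$). Your extra step---invoking Proposition~\ref{ex-hmm} at $k_0+1$ and writing $W^a-W_{k_0}^a=(W^a-W_{k_0+1}^a)+e^{-(k_0+1)e_0 t}\Phi_{k_0+1}^a$---is a clean way to recover the rate as literally stated, at no real cost. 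So your argument is marginally more faithful to the corollary's wording, while the paper's is marginally shorter; both yield identical conclusions for the rest of the paper.
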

\begin{proof}
By Proposition \ref{ex-hmm},
\begin{align*}
\|w^a(t)-v_{k_0}(t)\|_{H^{m,m}}=\|W^a(t)-W_{k_0}^a(t)\|_{H^{m,m}}&\le
e^{-(k_0+\frac 12) e_0 t},\ \ \forall\ t\ge t_{k_0}.
\end{align*}
An application of Sobolev embedding (see Lemma \ref{lem_embed_208}) and interpolation yields that for any $2\le p\le
\infty$ and $t$ sufficiently large
\begin{align*}
\|\langle x\rangle^{l_1}\nabla^{l_2}(w^a(t)-v_{k_0}(t))\|_{L_x^p}&\lsm
e^{-(k_0+\frac 12)e_0 t}\\
&\le \frac 12 e^{-k_0 e_0 t},
\end{align*}
as long as $m\ge l_1+l_2+\frac d2+1$. In particular, for any $2\le
r\le \frac{2(d-1)}{d-3}$,
\begin{align*}
&\quad \||\nabla|^{1-\beta(r)}(w^a(t)-v_{k_0}(t))\|_{L_x^r}+
\||\nabla|^{-\beta(r)}(w^a(t)-v_{k_0}(t)\|_{L_x^r}\\
&\lesssim \|\langle
\nabla\rangle(w^a(t)-v_{k_0}(t))\|_{L_x^r}+\|w^a(t)-v_{k_0}(t)\|_{L_x^{\frac{4rd}{r(d+1)+2d-2}}}\\
&\lsm e^{-(k_0+\frac 12)e_0 t}.
\end{align*}
In the last equality, we use the fact that
$\frac{4rd}{r(d+1)+2d-2}\ge 2$ for all $r\ge 2$. Integrating the time
variable over $[t,\infty)$, we obtain
\begin{align*}
\|w^a-v_{k_0}\|_{\dot S^1([t,\infty))}\lsm e^{-(k_0+\frac 12)e_0
t}\le \frac 12 e^{-k_0 e_0 t}.
\end{align*}
On the other hand, since $v_{k_0}$ is the combination of Schwartz
functions, we have by \eqref{lp-vk}
\begin{align*}
\|v_{k_0}\|_{\dot S^1([t,\infty))}\lsm e^{-e_0 t}\le \frac 12
e^{-\frac{e_0}2 t},
\end{align*}
for all $t$ sufficiently large. The estimates of $w^a$ then follow from triangle inequality.
\end{proof}


\begin{rem} \label{rem_revadd}
From the construction of $W^{\pm}(t)$, it is clear that they both
approaches to the ground state $W$ exponentially fast as $t \to
+\infty$. For the behavior of $W^{\pm}$ in negative time direction,
we can apply the same argument in \cite{duck-merle} (see proof
of Theorem 1, Proposition 2.8, Proposition 3.1 and Subsection 6.4 for instance) to conclude that $W^{-}$ scatters
when $t\to -\infty$ and $W^{+}$ blows up at finite time.
\end{rem}

\section{Two useful estimates}

First we show that $R(v)$ is superlinear in $v$, we have

\begin{lem}[Super-linearity of $R(v)$]\label{super-rv}
Let $I$ be a time slab.
We have
\begin{align}
\|R(v)(t)\|_{L_x^{\frac{2d}{d+2}}}&\lsm
\|v(t)\|^{p_c}_{\dot H_x^1},\label{sp-1}\\
 \|R(v)\|_{\dot N^1(I)}&\lsm
\|v\|_{\dot S^1(I)}^{\frac{d+3}{d+1}}+\|v\|_{\dot
S^1(I)}^{p_c}.\label{sp-2}
\end{align}
\end{lem}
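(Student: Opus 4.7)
For \eqref{sp-1}, the plan is to establish the pointwise bound $|R(v)(x)|\lesssim |v(x)|^{p_c}$. Writing
\[
R(v)=v\int_0^1\bigl(F'(W+sv)-F'(W)\bigr)\,ds
\]
with $F(u)=|u|^{p_c-1}u$, the key ingredient is the H\"older-type inequality $\bigl||a|^{p_c-1}-|b|^{p_c-1}\bigr|\lesssim |a-b|^{p_c-1}$, valid for $p_c\in(1,2]$, which is precisely the range $d\ge 6$. Once the pointwise bound is in hand, \eqref{sp-1} is immediate from the Sobolev embedding $\dot H_x^1\hookrightarrow L_x^{2d/(d-2)}$ together with the identity $p_c\cdot \tfrac{2d}{d+2}=\tfrac{2d}{d-2}$: we obtain $\|R(v)\|_{L_x^{2d/(d+2)}}\lesssim \|v\|_{L_x^{2d/(d-2)}}^{p_c}\lesssim \|v\|_{\dot H_x^1}^{p_c}$.

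For \eqref{sp-2}, the plan is to bound $\|R(v)\|_{\dot N^1(I)}$ through the two norms built into its definition, $L_t^1 L_x^2$ and $\||\nabla|^{1/2}\cdot\|_{L_{t,x}^{2(d+1)/(d+3)}}$, after splitting $R(v)$ according to the regimes $|v|>W$ and $|v|\le W$. In the large-$|v|$ regime the pointwise bound $|R(v)|\lesssim |v|^{p_c}$ is the main input; combined with the fractional chain rule (Lemma \ref{chr}) applied to $u\mapsto|u|^{p_c}$, this produces a bound of the form $\||\nabla|^{1/2}|v|^{p_c}\|_{L^{2(d+1)/(d+3)}_{t,x}}\lesssim \||v|^{p_c-1}\|_{L_{t,x}^{p_1}}\||\nabla|^{1/2}v\|_{L^{p_2}_{t,x}}$, with the exponents chosen so that the second factor sits at the diagonal wave-admissible pair $(q,r)=(\tfrac{2(d+1)}{d-1},\tfrac{2(d+1)}{d-1})$ (for which $1-\beta(r)=1/2$, so it is directly controlled by $\|v\|_{\dot S^1}$), while the first factor is bounded by the $(p_c-1)$-th power of another Strichartz-admissible norm of $v$. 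The resulting contribution is $\|v\|_{\dot S^1}^{p_c}$. In the small-$|v|$ regime, $v+W$ and $W$ are uniformly comparable, so the sharper Taylor-type bound $|R(v)|\lesssim W^{p_c-2}v^2$ is available; this piece is estimated in $L_t^1L_x^2$, distributing the weight $W^{p_c-2}$ (which grows at most polynomially in $\langle x\rangle$) by H\"older against Strichartz norms of $v$ at another admissible pair. The output exponent $(d+3)/(d+1)$ is produced by matching the resulting space-time exponents to the dual scaling of the diagonal Strichartz pair.

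The main obstacle is landing on the exponent $(d+3)/(d+1)$ exactly: the denominator $d+1$ is inherited from the dual wave pair $\tfrac{2(d+1)}{d+3}$, and obtaining it requires a careful balance of H\"older exponents in both $t$ and $x$ against the wave-admissibility relation $\tfrac{2}{q}=(d-1)(\tfrac12-\tfrac{1}{r})$. A secondary subtlety is that for $p_c<2$ the function $F(u)=|u|^{p_c-1}u$ is only $C^1$, not $C^2$, at the origin, so full-derivative chain rules for $R(v)$ fail; fortunately we only need the half-derivative $|\nabla|^{1/2}$, which puts us exactly within the hypotheses of Lemma \ref{chr}. Apart from this, all the remaining manipulations are routine combinations of H\"older, Sobolev embedding, and the Strichartz estimates recorded in Section~2.
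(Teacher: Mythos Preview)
Your treatment of \eqref{sp-1} is fine and equivalent to the paper's. The problems are all in \eqref{sp-2}.

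\medskip

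\textbf{First gap: the characteristic-function split is incompatible with $|\nabla|^{1/2}$.} Splitting $R(v)$ according to $\{|v|>W\}$ versus $\{|v|\le W\}$ means multiplying by characteristic functions, and neither piece is then of the form $F(v)$ for a $C^1$ function $F$. Lemma~\ref{chr} gives no control on $\||\nabla|^{1/2}\bigl(R(v)\chi_{|v|>W}\bigr)\|$; in particular, the pointwise bound $|R(v)|\lesssim |v|^{p_c}$ does \emph{not} imply $\||\nabla|^{1/2}R(v)\|\lesssim \||\nabla|^{1/2}|v|^{p_c}\|$. So the ``large-$|v|$'' half of your argument does not get off the ground.

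\textbf{Second gap: the $L_t^1L_x^2$ route cannot produce any exponent below $2$.} In the ``small-$|v|$'' regime you place $W^{p_c-2}v^2$ (note $W^{p_c-2}\sim\langle x\rangle^{d-6}$ is \emph{growing} for $d>6$) into $L_t^1L_x^2$. Any H\"older splitting of the $L_t^1$ norm into Strichartz factors forces each factor of $v$ into $L_t^q$ with $q\ge 2$, so the smallest total power of $\|v\|_{\dot S^1}$ you can reach this way is $2$. But for $d\ge 7$ one has $\tfrac{d+3}{d+1}<p_c<2$, so $\|v\|_{\dot S^1}^{2}$ is \emph{not} dominated by $\|v\|_{\dot S^1}^{(d+3)/(d+1)}+\|v\|_{\dot S^1}^{p_c}$ when $\|v\|_{\dot S^1}$ is large, and the lemma as stated does not follow.

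\medskip

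What the paper actually does: it uses the smooth algebraic decomposition
\[
R(v)=W^{p_c}J\!\Bigl(\frac{v}{W}\Bigr)+|v|^{p_c-1}v,\qquad J(s)=|1+s|^{p_c-1}(1+s)-1-p_cs-|s|^{p_c-1}s,
\]
and puts \emph{both} pieces into the half-derivative dual norm $\||\nabla|^{1/2}\cdot\|_{L_{t,x}^{2(d+1)/(d+3)}}$ (the $L_t^1L_x^2$ option is not used at all). The pure power $|v|^{p_c-1}v$ is handled exactly as you describe. For $W^{p_c}J(v/W)$ the key device you are missing is a \emph{dyadic decomposition in physical space}: with smooth cutoffs $\psi_k$ supported on $\{|x|\sim 2^k\}$ one writes $W^{p_c}J(v/W)=\sum_k (W^{p_c}\psi_k)\,J(\tilde\psi_k v/W)$. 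On each shell $W$ is essentially the constant $2^{-k(d-2)}$, so the fractional product rule separates the weight from $v$; one then uses the interpolated bounds $|J(s)|\lesssim |s|^{(d+3)/(d+1)}$ and $|J'(s)|\lesssim |s|^{4/(d+1)}$, and the geometric decay of $W^{p_c}\psi_k$ makes the sum over $k$ converge. This is precisely how the exponent $\tfrac{d+3}{d+1}$ arises.
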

\begin{proof} By the definition of $R(v)$, we write
\begin{align*}
R(v)&=|v+W|^{p_c-1}(v+W)-p_c W^{p_c-1}v-W^{p_c}-|v|^{p_c-1}v+|v|^{p_c-1}v\\
&=W^{p_c}J(\frac vW)+|v|^{p_c-1}v,
\end{align*}
where
\begin{align} \label{js-def}
J(s)=|1+s|^{p_c-1}(1+s)-p_c s-1-|s|^{p_c-1}s.
\end{align}
Note $J(s)$ is
differentiable and for $d\ge 6$,
\begin{align}\label{js}
J(s)\le \begin{cases} |s|,& |s|\ge\frac 12,\\ |s|^{p_c},& |s|<\frac
12.\end{cases}\qquad J'(s)\le\begin{cases} 1, & |s|^{p_c-1} \ge \frac
12,\\|s|,& |s|<\frac 12.\end{cases}
\end{align}
We first give a quick proof of \eqref{sp-1}. Since by \eqref{js},
$J(s)\lsm |s|^{p_c}$, we have
\begin{align*}
\|R(v)(t)\|_{L_x^{\frac{2d}{d+2}}}&\lsm\||v(t)|^{p_c}\|_{L_x^{\frac{2d}{d+2}}}\\
&\lsm \|v(t)\|_{L_x^{\frac{2d}{d-2}}}^{p_c}\lsm
\|v(t)\|_{H_x^1}^{p_c}.
\end{align*}

Now we compute the $\dot N^1$ norm of $R(v)$. In the following, all
spacetime norms are on $I\times \R^d$. We have
\begin{align}\label{air}
\|R(v)\|_{\dot N^1(I)}\le \||\nabla|^{\frac 12}(W^{p_c}J(\frac
vW))\|_{L_{t,x}^{\frac{2(d+1)}{d+3}}}+\||\nabla|^{\frac
12}(|v|^{p_c-1}v)\|_{L_{t,x}^{\frac{2(d+1)}{d+3}}}.
\end{align}
For the second term, we use Lemma \ref{chr} and H\"older inequality
to get
\begin{align}
\||\nabla|^{\frac
12}(|v|^{p_c-1}v)\|_{L_{t,x}^{\frac{2(d+1)}{d+3}}}&\le
\||v|^{p_c-1}\|_{L_{t,x}^{\frac{d+1}2}}\||\nabla|^{\frac 12}
v\|_{L_{t,x}^{\frac{2(d+1)}{d-1}}}\label{bir}\\
&\lsm \|v\|_{L_{t,x}^{\frac{2(d+1)}{d-2}}}^{p_c-1}\||\nabla|^{\frac
12}v\|_{L_{t,x}^{\frac{2(d+1)}{d-1}}}\notag\\
&\lsm \|v\|_{\dot S^1(I)}^{p_c},\notag
\end{align}
which is good for us. For the first term on the RHS of \eqref{air},
we follow the idea in \cite{duck-merle:wave} and cut it into dyadic
pieces. To this end, we introduce a smooth cutoff function $\phi(x)$
which satisfies: $\phi(x)=1$ when $|x|\le 1$ and $\phi(x)=0$ when
$|x|>2$. Denote
\begin{align*}
\psi_k(x)=\begin{cases} \phi(x),& k=0,\\ \phi(\frac
x{2^{k}})-\phi(\frac x{2^{k-1}}),& k\ge 1.\end{cases}
\end{align*}
Then we verify that
\begin{align*}
supp\ \psi_k(x)=\{x:\ |x|\sim 2^k\};\ & \ supp\ \psi_0(x)=\{x:\
|x|\lsm
1\}.\\
\sum_{k\ge 0} \psi_k(x)&=1,\ \forall\ x\in\R^d.
\end{align*}
We also introduce a "fat" cutoff function $\tilde\psi_k(x)$ which
equals one the support of $\psi_k$. By rescaling, it is
straightforward to verify that: for any $1 \le  p \le \infty$ and
$\beta\in \R$
\begin{align}\label{prop-ps}
\|\langle x\rangle^{\beta}\psi_k\|_p\lsm 2^{k(\frac dp+\beta)},\
\||\nabla|^{\frac 12}(\langle x\rangle^{\beta}\psi_k)\|_p\lsm
2^{k(\frac dp+\beta-\frac 12)}.
\end{align}
The same estimates hold if we replace $\psi_k$ by $\tilde\psi_k$.

Now we use Lemma \ref{chr} to estimate the first term on the RHS in
\eqref{air} as follows:
\begin{align}
&\||\nabla|^{\frac 12}(W^{p_c}J(\frac
vW))\|_{L_{t,x}^{\frac{2(d+1)}{d+3}}}\notag\\
&\le \sum_{k\ge 0}\||\nabla|^{\frac12}(W^{p_c}\psi_k J(\tilde
\psi_k\frac vW))\|_{L_{t,x}^{\frac{2(d+1)}{d+3}}}\notag\\
&\lsm \sum_{k\ge 0}\||\nabla|^{\frac
12}(W^{p_c}\psi_k)\|_{p_1}\|J(\tilde \psi_k\frac
vW)\|_{L_t^{\frac{2(d+1)}{d+3}}L_x^{p_2}}\label{sum1}\\
& \quad+\sum_{k\ge 0}\|W^{p_c}\psi_k\|_{q_1}\||\nabla|^{\frac
12}J(\tilde \psi_k \frac
vW)\|_{L_t^{\frac{2(d+1)}{d+3}}L_x^{q_2}},\label{sum2}
\end{align}
where $1< p_1,p_2,q_1,q_2<\infty$ are chosen such that
\begin{align*}
\frac 1{p_1}+\frac 1{p_2}=\frac 1{q_1}+\frac
1{q_2}=\frac{d+3}{2(d+1)}.
\end{align*}
We first deal with \eqref{sum1}. Choosing $p_1=\frac
{4d(d+1)} {3(d+3)}$, $p_2=\frac{4d(d+1)}{(d+3)(2d-3)}$,
$p_3=\frac{4d(d+1)}{3(d+3)}$, $p_4=\frac{2d(d+1)}{d^2-9}$, we check
that
\begin{align*}
\frac 1{p_1}+\frac 1{p_2}=\frac{d+3}{2(d+1)},\ \frac 1{p_2}=\frac
1{p_3}+\frac 1{p_4}.
\end{align*}
Noting $W^{p_c}\lsm \langle x\rangle ^{-(d+2)}$, $|J(s)|\lsm
|s|^{\frac{d+3}{d+1}}$, using \eqref{prop-ps} we bound the summand
in \eqref{sum1} by
\begin{align*}
&2^{k(\frac d{p_1}-d-2-\frac12)}\|\tilde \psi_k
W^{-\frac{d+3}{d+1}}v^{\frac{d+3}{d+1}}\|_{L_t^{\frac{2(d+1)}{d+3}}L_x^{p_2}}\\
&\qquad\lsm 2^{k(\frac d{p_1}-d-\frac 52)}\|\tilde \psi_k
W^{-\frac{d+3}{d+1}}\|_{L_x^{p_3}}\|v\|^{\frac {d+3}{d+1}}_{L_t^2L_x^{\frac{p_4(d+3)}{d+1}}}\\
&\qquad\lsm 2^{k(\frac d{p_1}-d-\frac 52)}2^{k(\frac
d{p_3}+\frac{(d-2)(d+3)}{d+1})}\|v\|^{\frac {d+3}{d+1}}_{L_t^2L_x^{\frac{2d}{d-3}}}\\
&\qquad\lsm 2^{-k\frac{d+4}{d+1}}\|v\|_{\dot
S^1(I)}^{\frac{d+3}{d+1}}.
\end{align*}
Summing in $k$, we obtain
\begin{align}
\eqref{sum1}\lsm \|v\|_{\dot
S^1(I)}^{\frac{d+3}{d+1}}.\label{est-sum1}
\end{align}

We now deal with \eqref{sum2}. Let $q_1=\frac{d(d+1)}6$,
$q_2=\frac{2d(d+1)}{d^2+3d-12}$, $q_3=\frac{2(d+1)}{d-1}$,
$q_4=\frac{d(d+1)}{2(d-3)}$. We verify that
\begin{align*}
\frac 1{q_1}+\frac 1{q_2}=\frac{d+3}{2(d+1)},\ \ \frac 1{q_3}+\frac
1{q_4}=\frac 1{q_2}.
\end{align*}
Note by \eqref{js}, we have $|J'(s)|\lsm |s|^{\frac 4{d+1}}$. Using
\eqref{prop-ps} and Lemma \ref{chr} we estimate the summand in
\eqref{sum2} by
\begin{align*}
&\|W^{p_c}\psi_k\|_{L_x^{q_1}}\||\nabla|^{\frac 12} J(\tilde \psi_k
W^{-1}v)\|_{L_t^{\frac{2(d+1)}{d+3}}L_x^{q_2}}\\
&\lsm \|W^{p_c}\psi_k\|_{L_x^{q_1}}\|J'(\tilde \psi_k
W^{-1}v)\|_{L_t^{\frac{d+1}2}L_x^{q_4}}\||\nabla|^{\frac 12}(\tilde
\psi_k W^{-1}v)\|_{L_t^{\frac{2(d+1)}{d-1}}L_x^{q_3}}\\
&\lsm \|W^{p_c}\psi_k\|_{L_x^{q_1}}\|\tilde
\psi_kW^{-1}v\|_{L_t^2L_x^{\frac{2d}{d-3}}}^{\frac 4{d+1}} \\
&\qquad \cdot
\biggl(
\||\nabla|^{\frac12}v\|_{L_{t,x}^{\frac{2(d+1)}{d-1}}}\|\tilde\psi_k
W^{-1}\|_{L_x^{\infty}}+\||\nabla|^{\frac 12}(\tilde\psi_k
W^{-1} )\|_{L_x^{2d}}\|v\|_{L_t^{\frac{2(d+1)}{d-1}}L_x^{\frac{2d(d+1)}{d^2-2d-1}}}\biggr)\\
&\lsm \|W^{p_c}\tilde \psi_k\|_{L_x^{q_1}}\|\tilde\psi_k
W^{-1}\|_{L_x^{\infty}}^{\frac 4{d+1}}\|v\|_{\dot S^1(I)}^{1+\frac
4{d+1}}(\|\tilde \psi_k W^{-1}\|_{L_x^{\infty}}+\||\nabla|^{\frac
12}(\tilde\psi_k W^{-1})\|_{L_x^{2d}})\\
&\lsm 2^{k(\frac d{q_1}-d-2+\frac{4(d-2)}{d+1}+d-2)}\|v\|_{\dot
S^1(I)}^{1+\frac 4{d+1}}\\
&\lsm 2^{-\frac {6k}{d+1}}\|v\|_{\dot S^1(I)}^{1+\frac 4{d+1}}.
\end{align*}
Summing in $k$ we obtain
\begin{align}
\eqref{sum2}\lsm \|v\|_{\dot
S^1(I)}^{\frac{d+5}{d+1}}.\label{est-sum2}
\end{align}
Collecting the estimates \eqref{est-sum1}, \eqref{est-sum2} we
obtain
\begin{align}
\||\nabla|^{\frac 12}(W^{p_c}J(\frac
vW))\|_{L_{t,x}^{\frac{2(d+1)}{d+3}}}\lsm \|v\|_{\dot
S^1(I)}^{\frac{d+3}{d+1}}+\|v\|_{\dot
S^1(I)}^{\frac{d+5}{d+1}}.\label{need}
\end{align}
This together with \eqref{air}, \eqref{bir} yields that
\begin{align*}
\|R(v)\|_{\dot N^1(I)}\lsm \|v\|_{\dot S^1(I)}^{\frac
{d+3}{d+1}}+\|v\|_{\dot S^1(I)}^{p_c}.
\end{align*}
Lemma \ref{super-rv} is proved.
\end{proof}

Based on this estimate, we have the following gain of decay
estimate.

\begin{lem}[Gain of decay]\label{gain-decay} Let $w^a=W^a-W$. Then
for sufficiently large $t$ we have
\begin{align}
&\|R(h+w^a)(t)-R(w^a)(t)\|_{L_x^{\frac{2d}{d+2}}}\lsm \|h(t)\|_{\dot
H_x^1}^{p_c}+\|h(t)\|_{\dot H_x^1}e^{-(p_c-1)e_0t},\label{gd-1}\\
&\|R(h+w^a)-R(w^a)\|_{\dot N^1([t,\infty))} \notag \\
&\qquad \lsm  \|h\|_{\dot
S^1([t,\infty))}^{p_c}+
\|h\|_{\dot S^1([t,\infty))}^{\frac {d+3}{d+1}}+
\|h\|_{\dot S^1([t,\infty))}e^{-(p_c-1)e_0
t}.\label{gn-dc-est}
\end{align}
\end{lem}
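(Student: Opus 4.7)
\medskip

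\noindent\textbf{Proof plan.} The plan is to reduce the estimate to Lemma \ref{super-rv} plus a manifestly bilinear cross term that carries the decay of $w^a$. Writing $F(u)=|u|^{p_c-1}u$ so that $R(v)=F(v+W)-F(W)-F'(W)v$, one checks by direct expansion that the two $F(W)$-cancellations and the two $F'(W)$-cancellations produce the clean identity
\begin{align*}
R(h+w^a)-R(w^a)=R(h)+T_1,\qquad T_1:=\bigl[F(h+w^a+W)-F(h+W)\bigr]-\bigl[F(w^a+W)-F(W)\bigr].
\end{align*}
The $R(h)$-piece is handled for free by Lemma \ref{super-rv}: \eqref{sp-1} gives the $\|h\|_{\dot H^1}^{p_c}$-contribution to \eqref{gd-1}, and \eqref{sp-2} gives the $\|h\|_{\dot S^1}^{(d+3)/(d+1)}+\|h\|_{\dot S^1}^{p_c}$-contribution to \eqref{gn-dc-est}. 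All that remains is $T_1$.

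For $T_1$ I would use the mean-value identity
\begin{align*}
T_1=h\int_0^1\bigl[F'(W+\theta h+w^a)-F'(W+\theta h)\bigr]\,d\theta
\end{align*}
combined with the elementary H\"older-type inequality $\bigl||a|^\alpha-|b|^\alpha\bigr|\le|a-b|^\alpha$ valid for $\alpha\in(0,1]$, applied with $\alpha=p_c-1=\tfrac{4}{d-2}\le 1$ (this is where $d\ge 6$ enters). This yields the pointwise bound
\begin{align*}
|T_1(t,x)|\lesssim |w^a(t,x)|^{p_c-1}\,|h(t,x)|,
\end{align*}
which immediately turns the problem into an application of H\"older and Corollary \ref{prop-wa}. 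For \eqref{gd-1}, H\"older with $\tfrac{d+2}{2d}=\tfrac{2}{d}+\tfrac{d-2}{2d}$ and the Sobolev embedding $\dot H^1\hookrightarrow L^{2d/(d-2)}$ give
$\|T_1(t)\|_{L^{2d/(d+2)}}\lesssim \|w^a(t)\|_{L^{2d/(d-2)}}^{p_c-1}\|h(t)\|_{\dot H^1}\lesssim e^{-(p_c-1)e_0 t}\|h(t)\|_{\dot H^1}$,
where the last step uses Corollary \ref{prop-wa} applied with $l_1=l_2=0$ and $p=2d/(d-2)\in[2,\infty]$.

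For \eqref{gn-dc-est} I would place $T_1$ into the $L_t^1L_x^2$ piece of $\dot N^1$. Taking H\"older with $\tfrac12=\tfrac1d+\tfrac{d-2}{2d}$ gives
$\|T_1(t)\|_{L_x^2}\lesssim \|w^a(t)\|_{L^{(p_c-1)d}}^{p_c-1}\|h(t)\|_{\dot H^1}\lesssim e^{-(p_c-1)e_0 t}\|h(t)\|_{\dot H^1}$,
with the decay coming from Corollary \ref{prop-wa} applied to $p=(p_c-1)d=\tfrac{4d}{d-2}\in[4,6]\subset[2,\infty]$ (valid for all $d\ge 6$). Integrating in time over $[t,\infty)$ and using $\|h\|_{L_t^\infty\dot H^1}\le\|h\|_{\dot S^1([t,\infty))}$ yields
$\|T_1\|_{\dot N^1([t,\infty))}\le\|T_1\|_{L_t^1L_x^2([t,\infty))}\lesssim e^{-(p_c-1)e_0 t}\|h\|_{\dot S^1([t,\infty))}$. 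Combining the two contributions produces exactly \eqref{gd-1} and \eqref{gn-dc-est}.

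The main obstacle is the control of $T_1$: in low dimensions one would naturally try a second-order Taylor expansion of $F$, producing $|T_1|\lesssim|w^a||h|\cdot W^{p_c-2}$ and hence a cleaner factor $e^{-e_0 t}$, but this fails in high dimensions precisely because $F$ is only $C^1$ when $p_c-1\le 1$. The algebraic identity $R(h+w^a)-R(w^a)=R(h)+T_1$ is chosen so that the product structure of $T_1$ survives the use of the \emph{non-smooth} H\"older bound $||a|^\alpha-|b|^\alpha|\le|a-b|^\alpha$; this is why the decay rate in the final estimate is $e^{-(p_c-1)e_0 t}$ rather than the stronger $e^{-e_0 t}$ that one would obtain in the smooth regime, and it is exactly strong enough for the bootstrap arguments of Section 5.
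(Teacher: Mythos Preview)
Your proof is correct, and it takes a genuinely different route from the paper's.

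The paper expands around the shifted ground state $w^a+W$: writing
\[
R(h+w^a)-R(w^a)=|w^a+W|^{p_c}J\!\Bigl(\tfrac{h}{w^a+W}\Bigr)+p_c\,h\bigl(|w^a+W|^{p_c-1}-W^{p_c-1}\bigr)+|h|^{p_c-1}h,
\]
it treats the $J$-term by \emph{redoing} the dyadic decomposition of Lemma~\ref{super-rv} with $W$ replaced by $w^a+W$ (using Corollary~\ref{prop-wa} to verify that $w^a+W$ obeys the same weighted bounds \eqref{hug1}--\eqref{hug2} as $W$), and treats the middle term via $\bigl||w^a+W|^{p_c-1}-W^{p_c-1}\bigr|\le|w^a|^{p_c-1}$ and H\"older in $L_t^2L_x^{2d/(d-3)}\times L_t^2L_x^{2d/3}$.

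Your decomposition $R(h+w^a)-R(w^a)=R(h)+T_1$ is cleaner in that it feeds the ``main'' piece directly into Lemma~\ref{super-rv} as a black box, so the dyadic argument is invoked only once (for $W$ itself) rather than rerun for $w^a+W$. The cross term $T_1$ then admits the simple pointwise bound $|T_1|\lesssim|w^a|^{p_c-1}|h|$, using the same H\"older-type inequality $\bigl||a|^\alpha-|b|^\alpha\bigr|\le|a-b|^\alpha$ for $\alpha=p_c-1\le1$ that the paper also exploits; your route just applies it to $F'(W+\theta h+w^a)-F'(W+\theta h)$ instead of $|w^a+W|^{p_c-1}-W^{p_c-1}$. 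A minor bonus is that your argument does not need the smallness $|w^a|\le\tfrac14W$ that the paper invokes to keep $w^a+W$ positive. Both approaches ultimately hinge on $d\ge6$ through the inequality $\alpha=p_c-1\le1$, and both produce the same decay factor $e^{-(p_c-1)e_0 t}$ rather than $e^{-e_0 t}$.
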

\begin{proof} Note from Corollary \ref{prop-wa} that
$|w^a(t,x)|\le\frac 14W(x)$, we have $w^a+W>\frac 12 W$. We can
write
\begin{align*}
&R(h+w^a)-R(w^a) \\
&=|h+w^a+W|^{p_c-1}(h+w^a+W)-|w^a+W|^{p_c-1}(w^a+W)-p_cW^{p_c-1}h\\
&=|h+w^a+W|^{p_c-1}(h+w^a+W)-|w^a+W|^{p_c-1}(w^a+W)-p_c|w^a+W|^{p_c-1}h\\
&\qquad+p_c
h(|w^a+W|^{p_c-1}-W^{p_c-1})\\
&=|w^a+W|^{p_c}J(\frac h{w^a+W})+p_c
h(|w^a+W|^{p_c-1}-W^{p_c-1})+|h|^{p_c-1}h,
\end{align*}
where $J(\cdot)$ is defined in \eqref{js-def}.
We first prove \eqref{gd-1}. Noting by \eqref{js} $|J(s)|\lsm |s|^{p_c}$ and using
Corollary \ref{prop-wa} we have
\begin{align*}
\|R(h+w^a)(t)-R(w^a)(t)\|_{L_x^{\frac{2d}{d+2}}}&\lsm
\||h|^{p_c}\|_{L_x^{\frac{2d}{d+2}}}+\|h|w^a|^{p_c-1}\|_{L_x^{\frac{2d}{d+2}}}\\
&\lsm \|h(t)\|_{\dot H_x^1}^{p_c}+\|h(t)\|_{\dot
H_x^1}\|w^a(t)\|_{L_x^{\frac{2d}{d-2}}}^{p_c-1}\\
&\lsm \|h(t)\|_{\dot H_x^1}^{p_c}+\|h(t)\|_{\dot
H_x^1}e^{-(p_c-1)e_0 t}.
\end{align*}
Next we prove \eqref{gn-dc-est}. By triangle inequality we have
\begin{align*}
&\|R(h+w^a)-R(w^a)\|_{\dot N^1([t,\infty))} \\
\lsm & \||\nabla|^{\frac
12}(|w^a+W|^{p_c}J(\frac{h}{w^a+W}))\|_{L_{t,x}^{\frac{2(d+1)}{d+3}}}\\
&+\|h(|w^a+W|^{p_c-1}-W^{p_c-1})\|_{L_t^1L_x^2}+\||\nabla|^{\frac
12}(|h|^{p_c-1} h)\|_{L_{t,x}^{\frac{2(d+1)}{d+3}}}.
\end{align*}
The third term on the RHS has been treated (see \eqref{bir}) so we have
\begin{align}
\||\nabla|^{\frac 12}(|h|^{p_c-1}
h)\|_{L_{t,x}^{\frac{2(d+1)}{d+3}}}\lsm \|h\|_{\dot
S^1([t,\infty))}^{p_c}.\label{col1}
\end{align}
For the second term, note for $d\ge 6$
\begin{align*}
||w^a+W|^{p_c-1}-W^{p_c-1}|\le |w^a|^{p_c-1},
\end{align*}
we estimate by using Corollary \ref{prop-wa}
\begin{align}
\|h(|w^a+W|^{p_c-1}-W^{p_c-1})\|_{L_s^1L_x^2}&\lsm
\|h\|_{L_s^2L_x^{\frac{2d}{d-3}}}\||w^a|^{p_c-1}\|_{L_s^2L_x^{\frac{2d}3}}\label{col2}\\
&\lsm e^{-(p_c-1)e_0 t}\|h\|_{\dot S^1([t,\infty))}.\notag
\end{align}
This is good for us.
To estimate the first term, we borrow the proof of \eqref{need}
which is still valid if we replace $W$ by another function having
the same decay. Indeed, by checking the proof of \eqref{need}, we
easily find that all we need is the following
\begin{align}
\||\nabla|^{\frac 12}(W^{\alpha}\psi_k)\|_p\lsm 2^{k(\frac
dp-\alpha(d-2)-\frac 12)},\ \ \forall\ \alpha>0,\label{hug1}\\
\|W^{\beta} \psi_k\|_p\lsm 2^{k(\frac dp-\beta(d-2))},\ \ \forall \
\beta\in\R,\label{hug2}
\end{align}
with the same estimates holding for $\tilde\psi_k$. Using Corollary
\ref{prop-wa}, we verify that \eqref{hug1}, \eqref{hug2} hold if we
replace $W$ by $w^a+W$. Thus, the same proof in proving \eqref{need}
establishes that
\begin{align}
\||\nabla|^{\frac 12}((w^a+W)^{p_c}J(\frac
v{w^a+W}))\|_{L_{t,x}^{\frac{2(d+1)}{d+3}}}\lsm \|h\|_{\dot
S^1([t,\infty))}^{\frac{d+3}{d+1}}+\|h\|_{\dot
S^1([t,\infty))}^{\frac{d+5}{d+1}}.\label{col3}
\end{align}
Collecting the estimates \eqref{col1}, \eqref{col2}, \eqref{col3},
we obtain \eqref{gn-dc-est}.

\end{proof}

\section{Classification of the solution}

Our purpose of this section is to prove Theorem \ref{u is w}.
Following the argument in \cite{duck-merle:wave}, the key step is to
establish the following

\begin{thm}\label{u-is-wa2}
Let $\gamma_0>0$. Assume $u$ is the solution of the equation in
\eqref{nlw} satisfying $E(u_0,u_1)=E(W,0)$ and
\begin{align}
\|\nabla(u(t)-W)\|_{2}+\|\partial_t u(t)\|_2\le Ce^{-\gamma_0 t},\ \
\forall \ t\ge 0 ,\label{small decay}
\end{align}
then there exists unique $a\in\R$ such that
$$
u=W^a.
$$
\end{thm}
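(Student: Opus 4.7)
The plan is to follow the strategy of Duyckaerts--Merle, with Lemma \ref{gain-decay} serving as the substitute for the Lipschitz estimate used in low dimensions. Write $v=u-W$, so that $v_{tt}+\mathcal{L}v=R(v)$ with $\|v(t)\|_{\dot H^1_x}+\|\partial_t v(t)\|_{L^2_x}\lesssim e^{-\gamma_0 t}$. By Lemma \ref{spe-l}, $\mathcal{L}$ has a unique negative eigenvalue $-e_0^2$ with eigenfunction $\mathcal{Y}$, and the first task is to identify the correct $a$ by spectral projection. Setting $\alpha(t):=\langle v(t),\mathcal{Y}\rangle/\|\mathcal{Y}\|_2^2$ leads to the scalar ODE $\alpha''(t)-e_0^2\alpha(t)=\langle R(v(t)),\mathcal{Y}\rangle/\|\mathcal{Y}\|_2^2$, whose right-hand side is $O(e^{-p_c\gamma_0 t})$ by Lemma \ref{super-rv}. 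Combined with $\alpha(t)\to 0$ and the variation-of-constants formula for the decaying branch of this hyperbolic ODE, this forces $\alpha(t)=ae^{-e_0 t}+O(e^{-\mu t})$ for some $a\in\R$ and some $\mu>e_0$. This $a$ is the natural candidate.

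Next, apply Proposition \ref{ex-hmm} to construct $W^a$ with this choice of $a$, and set $h=u-W^a=v-w^a$, which satisfies
\[
h_{tt}+\mathcal{L}h=R(w^a+h)-R(w^a).
\]
The goal becomes $h\equiv 0$. The engine is a bootstrap improving the exponential decay rate of $h$: suppose $\|h\|_{\dot S^1([t,\infty))}\le e^{-\mu t}$ for some $\mu>0$. Then Lemma \ref{gain-decay} yields
\[
\|R(w^a+h)-R(w^a)\|_{\dot N^1([t,\infty))}\lesssim e^{-p_c\mu t}+e^{-\frac{d+3}{d+1}\mu t}+e^{-(\mu+(p_c-1)e_0)t},
\]
so the forcing decays strictly faster than $e^{-\mu t}$. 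Feeding this back through Strichartz for the $\mathcal{Y}^\perp$-component (where $\mathcal{L}$ is coercive modulo a finite-dimensional kernel coming from symmetries, handled via orthogonality conditions) together with an explicit ODE analysis on the $\mathcal{Y}$-component (where the choice of $a$ ensures the coefficient of the unstable mode $e^{e_0 t}$ vanishes at $+\infty$), one recovers $\|h\|_{\dot S^1([t,\infty))}\lesssim e^{-\mu' t}$ with $\mu'>\mu$. Iterating drives the decay rate of $h$ past any prescribed exponential.

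Once super-exponential decay of $h$ in $\dot S^1$ is established, regularity and weights can be propagated through the Duhamel formula -- repeating the estimates of Proposition \ref{ex-hmm} applied to $h$ -- to place $u-W_k^a=h+(w^a-v_k)$ in the ball $B_k\subset\Sigma_{t_k}$ for every sufficiently large $k$. The uniqueness clause of Proposition \ref{ex-hmm} then forces $u\equiv W^a$. Uniqueness of $a$ is immediate from the leading-order expansion $w^a-w^{a'}=(a-a')e^{-e_0 t}\mathcal{Y}+O(e^{-2e_0 t})$: if $W^a=W^{a'}$ then $a=a'$.

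The delicate step is the bootstrap, where one must simultaneously handle (i) the unstable mode of $\mathcal{L}$, which requires $a$ to be \emph{exactly} tuned so that no $e^{e_0 t}\mathcal{Y}$ component is present in $h$, and (ii) the lack of Lipschitz continuity of the nonlinearity in high dimensions, which is precisely what Lemma \ref{gain-decay} circumvents by exploiting the decay of $w^a$ provided by Corollary \ref{prop-wa}. The first part is spectral bookkeeping; the second is where the high-dimensional treatment departs from \cite{duck-merle:wave}, replacing a simple contraction estimate by the dyadic-decomposition bound built in Lemma \ref{gain-decay}. Upgrading the $\dot S^1$ decay of $h$ to the weighted norm required to invoke the uniqueness in Proposition \ref{ex-hmm} is expected to be the most technical piece.
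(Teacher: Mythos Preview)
Your overall strategy matches the paper's, and you have correctly identified Lemma \ref{gain-decay} as the high-dimensional replacement for the Lipschitz estimate. The spectral decomposition you propose (projecting onto $\mathcal Y$ and its complement, then running an ODE/Strichartz bootstrap) is precisely what the paper packages into the black-box Lemma \ref{upgrade}, so your bootstrap is essentially the paper's Steps 1--2 written out longhand. One slip in the ordering: your claim $\alpha(t)=ae^{-e_0 t}+O(e^{-\mu t})$ with $\mu>e_0$ requires the forcing $\langle R(v),\mathcal Y\rangle$ to decay faster than $e^{-e_0 t}$, i.e.\ $p_c\gamma_0>e_0$, which is not assumed. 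The paper first iterates the bootstrap on $v=u-W$ itself (with $a$ still undetermined) until the decay rate reaches $e_0^-$, and only then reads off $a$; your argument needs the same preliminary upgrade before $a$ can be pinned down.

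The genuine gap is your final step. You propose to upgrade the super-exponential $\dot S^1$ decay of $h=u-W^a$ to $H^{m,m}$ decay and then invoke the uniqueness clause of Proposition \ref{ex-hmm}. But $u$ is only assumed to lie in $\dot H^1_x\times L^2_x$, and the wave propagator does not smooth, so there is no mechanism to place $h$ in $H^{m,m}$: even to start the $H^{m,m}$ estimates of Proposition \ref{ex-hmm} one needs the pointwise bound $|h|\le\tfrac14 W$ (to Taylor-expand the non-$C^1$ nonlinearity), and that bound comes from the embedding $H^{m,m}\hookrightarrow \langle x\rangle^{-(d-2)}L^\infty$, which is unavailable from $\dot S^1$ control alone. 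The paper sidesteps this completely. Its Step 3 stays in $\dot S^1$: once $\|h\|_{\dot S^1([s,\infty))}\le e^{-ms}$ for all $m$, the linear potential term satisfies, after summing over short intervals,
\[
\|W^{p_c-1}h\|_{L^1_sL^2_x([t,\infty))}\lesssim \tfrac{1}{m}\,e^{-mt}\sup_{s\ge t}e^{ms}\|h\|_{\dot S^1([s,\infty))},
\]
while Lemma \ref{gain-decay} controls the remainder. For $m$ large the prefactor $1/m$ makes the Duhamel map a strict contraction on $\sup_{s\ge t}e^{ms}\|h\|_{\dot S^1([s,\infty))}$, forcing $h\equiv 0$ for $t\ge t_m$ and hence everywhere by uniqueness for \eqref{nlw}. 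No weighted or higher-regularity norms enter.
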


As a corollary of Theorem \ref{u-is-wa2}, we see that modulo time
translation, all the $\{W^a, a>0\}$ (also $\{W^a,\ a<0\}$) are same.

\begin{cor}\label{cor-ab}
For any $a\neq 0$, there exists $T_a\in\R$ such that
\begin{equation}\label{define}
\begin{cases}
W^a(t)=W^+(t+T_a),\ \mbox{ if } a>0,\\
W^a(t)=W^-(t+T_a),\ \mbox{ if } a<0.
\end{cases}
\end{equation}
\end{cor}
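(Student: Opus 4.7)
The plan is to apply Theorem \ref{u-is-wa2} to a suitable time-translate of $W^a$ and then identify the resulting parameter by matching leading-order asymptotics. Fix $a\neq 0$ and set $T_a:=\tfrac{1}{e_0}\log|a|$, so that $ae^{-e_0 T_a}=\mathrm{sgn}(a)$. Consider the translated solution $\tilde u(t):=W^a(t+T_a)$, which, by time-translation invariance of \eqref{nlw}, is again a solution of \eqref{nlw} with threshold energy $E(W,0)$. From Corollary \ref{prop-wa} and \eqref{extra-fir} one has
\[
\|\nabla(W^a(t)-W)\|_2+\|\partial_t W^a(t)\|_2 \;\lesssim\; e^{-\tfrac{e_0}{2}t}
\]
for all sufficiently large $t$, so $\tilde u$ satisfies the hypothesis \eqref{small decay} of Theorem \ref{u-is-wa2} (absorbing any finite-time behavior of the maximal-lifespan extension of $W^a$ into the constant $C$). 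That theorem then produces a unique $b\in\R$ with $\tilde u=W^{b}$.

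I would next identify $b$ by matching the leading term in the asymptotic expansion furnished by Lemma \ref{lemma:wka} and Proposition \ref{ex-hmm}, namely
\[
W^{c}(t)-W \;=\; c\,e^{-e_0 t}\mathcal Y \;+\; O_{\dot H_x^1}\bigl(e^{-2e_0 t}\bigr)
\]
for every $c\in\R$. Applying this with $c=a$ at time $t+T_a$ yields $\tilde u(t)-W = \mathrm{sgn}(a)\,e^{-e_0 t}\mathcal Y + O(e^{-2e_0 t})$, while the same expansion for $W^{b}$ produces leading coefficient $b$. Pairing with $\mathcal Y$ in $L^2_x$ (meaningful since $\mathcal Y\in\mathcal S(\R^d)$ is a nonzero eigenfunction of $\mathcal L$), multiplying by $e^{e_0 t}$, and sending $t\to\infty$ forces $b=\mathrm{sgn}(a)\in\{+1,-1\}$. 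Setting $W^{+}:=W^{+1}$ and $W^{-}:=W^{-1}$, which satisfy all the conclusions of Theorem \ref{exist-w} as noted in Remark \ref{rem_revadd}, one concludes that $W^a(\,\cdot\,+T_a)=W^{\pm}$ according to the sign of $a$, which is exactly \eqref{define}.

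The main technical point is the matching step just described: one has to know that the parameter $c$ in the construction genuinely corresponds to the coefficient of $e^{-e_0 t}\mathcal Y$ in the asymptotic expansion of $W^{c}-W$. This however is immediate from the decomposition $W^{c}-W = v_{k_0}^{c}+h$ in Proposition \ref{ex-hmm}, where $v_{k_0}^{c}$ begins with $c\,e^{-e_0 t}\mathcal Y$ and $h=O_{H^{m,m}}\bigl(e^{-(k_0+1/2)e_0 t}\bigr)$ is of strictly lower order, so the leading coefficient is cleanly isolated by the pairing with $\mathcal Y$. The rest of the argument is routine bookkeeping using the time-translation invariance of \eqref{nlw} and the uniqueness clause of Theorem \ref{u-is-wa2}.
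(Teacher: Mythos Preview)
Your argument is correct and follows essentially the same route as the paper: choose $T_a$ so that $|a|e^{-e_0 T_a}=1$, apply Theorem~\ref{u-is-wa2} to the time-translate $W^a(\cdot+T_a)$, and identify the resulting parameter via the leading $e^{-e_0 t}\mathcal Y$ coefficient. The paper does the identification slightly more tersely, recording the single $H^{m,m}$ estimate $\|W^a(t+T_a)-W\mp e^{-e_0 t}\mathcal Y\|_{H^{m,m}}\lesssim e^{-\frac32 e_0 t}$ directly from \eqref{extra-fir} and Lemma~\ref{lemma:wka}, whereas you spell out the matching by $L^2$-pairing with $\mathcal Y$; this is the same idea with an extra line of justification.
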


We now prove Theorem \ref{u-is-wa2}. The strategy is the following:
we first prove that there exists $a\in \R$ such that
$\|\nabla(u(t)-W^a(t))\|_2+\|\partial_t(u(t)-W^a(t))\|_2$ has enough
decay, then using the decay estimate to show that $u(t)-W^a(t)$ is
actually identically zero. To this end, we have to input the
condition \eqref{small decay} and upgrade it to the desired decay
estimate. At this point, we need the following crucial result from
\cite{duck-merle:wave}.

\begin{lem}\label{upgrade}\footnote{The presentation of Lemma
\ref{upgrade} is slightly different from Proposition 5.7 in
\cite{duck-merle:wave}. Here we use a weaker condition \eqref{ud-1},
\eqref{ud-2} to yield stronger conclusions \eqref{arrive}, \eqref{not-arrive}.
However, one can easily find that this change is harmless once we
apply Strichartz estimate and repeat the same argument in
establishing Proposition 5.7 in \cite{duck-merle:wave}.} Let $t_0\ge
0$. Let $h$ be the solution to the equation
\begin{align}
\partial_{tt} h+\mathcal L h=\eps,\ t\ge t_0,\label{linear-eq}
\end{align}
where $h\in C^0([t_0,\infty);\dot H_x^1)$, $\partial_t h\in
C^0([t_0,\infty);L_x^2)$, $\eps\in \dot N^1([t_0,\infty))$. Assume
for some constant $c_0$, $c_1$ such that $0<c_0<c_1$,
\begin{align}
\|\nabla h(t)\|_2+\|\partial_t h(t)\|_2&\le Ce^{-c_0
t},\label{ud-1}\\
\|\eps(t)\|_{L_x^{\frac{2d}{d+2}}}+\|\eps\|_{\dot
N^1([t,\infty))}&\le Ce^{-c_1 t}.\label{ud-2}
\end{align}
Let $c_1^-$ be an arbitrary number smaller than $c_1$. Then the
following statements hold true,

$\bullet$ If $c_1> e_0$, there exists $A\in\R$ such that
\begin{align}
\|\partial_t(h(t)-Ae^{-e_0 t}\mathcal Y)\|_2+ \|\nabla(h(t)-Ae^{-e_0
t}\mathcal Y)\|_2+\|h-Ae^{-e_0 t}\|_{\dot S^1([t,\infty))}\le C
e^{-c_1^- t}.\label{arrive}
\end{align}

$\bullet$ If $c_1\le e_0$,
\begin{align}
\|\nabla h(t)\|_2+\|\partial_t h(t)\|_2+\|h\|_{\dot
S^1([t,\infty))}\le Ce^{-c_1^- t}.\label{not-arrive}
\end{align}
\end{lem}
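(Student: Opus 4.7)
The strategy is to split $h$ along the spectral decomposition of $\mathcal{L}$ provided by Lemma \ref{spe-l}: write $h(t)=\alpha(t)\mathcal Y + h^\perp(t)$ with $\langle h^\perp(t), \mathcal Y\rangle=0$. On the one-dimensional subspace spanned by $\mathcal Y$, equation \eqref{linear-eq} reduces to a scalar second-order ODE which can be analyzed explicitly to extract the $e^{-e_0 t}$ mode. On the orthogonal complement, $\mathcal L$ has no negative eigenvalue, so Strichartz estimates combined with the Duhamel formula integrated from $+\infty$ can be closed, and an iterative bootstrap upgrades the crude rate $c_0$ to the sharper rate $c_1^-$.

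For the scalar part, pairing \eqref{linear-eq} against $\mathcal Y$ (normalized so $\|\mathcal Y\|_2=1$) and using $\mathcal L \mathcal Y=-e_0^2 \mathcal Y$ gives
\begin{equation*}
\ddot\alpha(t)-e_0^2\alpha(t)=F(t),\qquad F(t):=\langle \eps(t),\mathcal Y\rangle.
\end{equation*}
Since $\mathcal Y\in\mathcal S(\R^d)$, \eqref{ud-2} together with H\"older yields $|F(t)|\lesssim e^{-c_1 t}$, while \eqref{ud-1} gives $|\alpha(t)|+|\dot\alpha(t)|\lesssim e^{-c_0 t}$. I would diagonalize the ODE via $u_\pm=\dot\alpha\pm e_0\alpha$, which satisfy $\dot u_\pm=\pm e_0 u_\pm + F$. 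Because $\alpha$ decays, $u_+$ is uniquely determined by integration from $+\infty$, $u_+(t)=-\int_t^\infty e^{e_0(t-s)}F(s)\,ds$, and is bounded by $e^{-c_1 t}$. Integrating $u_-$ forward from a reference time produces a homogeneous piece $Ae^{-e_0 t}$ plus a particular solution. When $c_1>e_0$, the particular solution is of order $e^{-c_1 t}$ so that $\alpha(t)=A' e^{-e_0 t}+O(e^{-c_1 t})$ for some $A'\in\R$, yielding the leading term of \eqref{arrive}; when $c_1\le e_0$ a direct estimate (with a logarithmic correction in the borderline case) gives $|\alpha(t)|\lesssim e^{-c_1^- t}$, where the a priori bound $|\alpha|\lesssim e^{-c_0 t}$ with $c_0<c_1$ forces the $A$-coefficient to vanish in this regime. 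This is the scalar part of \eqref{not-arrive}.

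For the orthogonal part I would rewrite \eqref{linear-eq} as $\partial_{tt}h-\Delta h=p_c W^{p_c-1}h+\eps$, apply Duhamel from a large time $T$, and invoke the Strichartz estimate, estimating $W^{p_c-1} h$ via H\"older and the $\langle x\rangle^{-4}$ spatial decay of $W^{p_c-1}$ together with the current rate of $\|h\|_{\dot H^1}$. Combined with the ODE analysis, this produces a bootstrap step: if $\|h\|_{\dot S^1([T,\infty))}\lesssim e^{-cT}$ for some $c<c_1^-$, then one gains a strictly better rate $c'>c$ up to the ceiling $c_1^-$. Finitely many iterations, together with the explicit identification of the $A e^{-e_0 t}\mathcal Y$ mode from the scalar analysis, conclude \eqref{arrive} when $c_1>e_0$ and \eqref{not-arrive} when $c_1\le e_0$. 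The main obstacle is ensuring that the Duhamel formula for $\partial_{tt}+\mathcal L$ integrated from $+\infty$ is well defined despite the growing mode $e^{e_0 t}\mathcal Y$: one bypasses this by first performing the projection onto $\mathcal Y^\perp$, so that the remaining evolution is spectrally non-negative. The improved $\dot S^1$ bound is then obtained by systematically applying Strichartz to close the bootstrap at the sharper rate $c_1^-$, which is the sole modification of the energy-based argument of \cite{duck-merle:wave}.
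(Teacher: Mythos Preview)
Your approach is essentially what the paper invokes: the spectral decomposition and scalar ODE analysis of Duyckaerts--Merle (Proposition~5.7 in \cite{duck-merle:wave}) together with a Strichartz estimate to recover the $\dot S^1$ bound; the paper itself gives no independent proof and simply refers to that source with the footnoted remark that Strichartz handles the extra $\dot S^1$ conclusion.

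One point in your sketch needs correction, however. The improvement of the decay rate from $c_0$ to $c_1^-$ on the part orthogonal to $\mathcal Y$ does \emph{not} come from a Strichartz bootstrap on the potential term $p_c W^{p_c-1}h$: that term is linear in $h$, so feeding a rate $e^{-ct}$ into Duhamel returns the same rate and no gain is produced. In \cite{duck-merle:wave} the upgrade for $h^\perp$ is obtained instead from the energy identity for $\partial_{tt}+\mathcal L$ together with the nonnegativity/coercivity of the quadratic form $\langle \mathcal L\,\cdot,\cdot\rangle$ on the orthogonal complement of $\mathcal Y$ and of the zero modes $\partial_j W$, $\Lambda W$; the projections onto those zero modes satisfy scalar ODEs $\ddot\beta=\langle\eps,\cdot\rangle$ which are handled exactly as you treat $\alpha$. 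The Strichartz step is then only used \emph{a posteriori} to pass from the improved energy bound to the $\dot S^1$ bound, which is precisely what the paper's footnote says. Also, your remark that when $c_1\le e_0$ the a~priori bound ``forces the $A$-coefficient to vanish'' is not quite right: the homogeneous piece $Ae^{-e_0 t}$ is simply dominated by the $O(e^{-c_1^- t})$ particular solution in that regime, so no vanishing is required.
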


\vspace{0.3cm}

We give the proof of Theorem \ref{u-is-wa2}.

\begin{proof}[Proof of Theorem \ref{u-is-wa2}]
We divide the proof into three steps.

\texttt{Step 1}. Let $v=u-W$, then condition \eqref{small decay} gives
that
\begin{align}
\|\nabla v(t)\|_2+\|\partial_t v(t)\|_2\le Ce^{-\gamma_0
t}.\label{fs0}
\end{align}
WOLOG, we can assume $\gamma_0 < e_0$. We first show that this decay rate can be upgraded to $e^{-e_0^{-} t}$.
More precisely, we will prove that
\begin{align}
\begin{cases}
\|\nabla v(t)\|_2+\|\partial_t v(t)\|_2\le C e^{-e_0^{-} t},\\
\|R(v)(t)\|_{L_x^{\frac{2d}{d+2}}}+\|R(v)\|_{\dot
N^1([t,\infty))}\le C e^{-\frac{d+3}{d+1}e_0^{-} t}.
\end{cases}\label{fs1}
\end{align}
And there exists $a\in \R$ such that $\forall \ \eta>0$
\begin{align}
\|\nabla(v(t)-ae^{-e_0 t}\mathcal Y)\|_2&+\|\partial_t(v(t)-a
e^{-e_0
t}\mathcal Y)\|_2\label{fs2}\\
&+\|v-ae^{-e_0 s}\mathcal Y\|_{\dot S^1([t,\infty))}\le C_{\eta}
e^{-(\frac{d+3}{d+1}-\eta)e_0 t}.\notag
\end{align}
Note \eqref{fs2} is a consequence of \eqref{fs1}. Indeed assume \eqref{fs1} is true, then since $v$
satisfies the equation
\begin{align*}
\partial_{tt} v+\mathcal L v=R(v),
\end{align*}
applying Lemma \ref{upgrade} with $h=v$, $\eps=R(v)$ and $c_0=e_0^{-}$,
$c_1=\frac{d+3}{d+1}e_0^{-}$, we obtain \eqref{fs2}. It remains to show
\eqref{fs1} by using the condition \eqref{fs0}. To begin with we
show that \eqref{fs0} implies that
\begin{align*}
\|v\|_{\dot S^1([t,\infty))}\le C e^{-\gamma_0 t}.
\end{align*}
Let $s\ge t$. Let $\tau>0$ be a small constant to be chosen later. Using
Strichartz estimate on $[s,s+\tau]$ and Lemma \ref{super-rv} we have
\begin{align*}
\|v\|_{\dot S^1([s,s+\tau])}&\lsm \|\nabla v(s)\|_2+\|v_t(s)\|_2+
\|W^{p_c-1}v\|_{L_t^1L_x^2([s,s+\tau]\times\R^d)}+\|R(v)\|_{\dot
N^1([s,s+\tau])}\\
&\lsm e^{-\gamma_0 s}+\tau\|v\|_{L_t^{\infty}\dot
H_x^1([s,s+\tau]\times\R^d)}\|W^{p_c-1}\|_{L_x^d}+\|v\|_{\dot
S^1([s,s+\tau])}^{\frac{d+3}{d+1}}+\|v\|_{\dot S^1([s,s+\tau])}^{p_c}\\
&\lsm e^{-\gamma_0 s}+\tau\|v\|_{\dot S^1([s,s+\tau])}+\|v\|_{\dot
S^1([s,s+\tau])}^{p_c} + \| v\|^{\frac{d+3}{d+1}}_{\dot S^1([s,s+\tau])}.
\end{align*}
Taking $\tau$ small enough, by continuity argument we have
\begin{align*}
\|v\|_{\dot S^1([s,s+\tau])}\le C e^{-\gamma_0 s}.
\end{align*}
By triangle inequality we obtain
\begin{align*}
\|v\|_{\dot S^1([t,\infty))}&\le \sum_{j\ge 0} \|v\|_{\dot
S^1([t+\tau j,t+\tau(j+1)])}\\
&\le C\sum_{j\ge 0} e^{-\gamma_0(t+\tau j)}\\
&\lsm e^{-\gamma_0 t}.
\end{align*}
Lemma \ref{super-rv} yields that
\begin{align*}
\|R(v)(t)\|_{L_x^{\frac{2d}{d+2}}}\lsm e^{-\frac{d+3}{d+1}\gamma_0
t},\ \ \|R(v)\|_{\dot N^1([t,\infty))}\le
e^{-\frac{d+3}{d+1}\gamma_0 t}.
\end{align*}
Now we can apply Lemma \ref{upgrade} to obtain
\begin{align*}
\|\nabla v(t)\|_2+\|\partial_t v(t)\|_2+\|v\|_{\dot
S^1([t,\infty))}\le C(e^{-e_0^{-} t}+e^{-\frac{d+2}{d+1}\gamma_0 t}).
\end{align*}
If $\frac{d+2}{d+1}\gamma_0\ge e_0$, \eqref{fs1} is proved.
Otherwise, we are in the same situation with $\gamma_0$ being
replaced by $\frac{d+2}{d+1}\gamma_0$. Iterating this process
finitely many times yields \eqref{fs1}.

\vspace{0.2cm}

\texttt{Step 2}. In this step we prove that $u(t)-W^a(t)$ decays
arbitrarily fast. We prove that $\forall\ m>0$, there exists $t_m>0$
such that
\begin{align}
\|u-W^a\|_{\dot S^1([t,\infty))}\le e^{-m t}, \ t\ge
t_m.\label{dudu0}
\end{align}
To begin with, we show \eqref{dudu0} holds for $m=\frac{d+2}{d+1}e_0
$. Indeed, by triangle inequality and recall that $v=u-W$, we
estimate
\begin{align*}
&\|u-W^a\|_{\dot S^1([t,\infty))} \\
\le & \|v-ae^{-e_0 s}\mathcal
Y\|_{\dot S^1([t,\infty))}+\|w^a-v_{k_0}\|_{\dot
S^1([t,\infty))}+\|v_{k_0}-ae^{-e_0 s}\mathcal Y\|_{\dot
S^1([t,\infty))}.
\end{align*}
For the first term, we use \eqref{fs2} to estimate
\begin{align*}
\|v-ae^{-e_0 s}\mathcal Y\|_{\dot S^1([t,\infty))}\le \frac 13
e^{-\frac{d+2}{d+1}e_0 t}.
\end{align*}
For the last two terms, we use the definition of $v_{k_0}$ and
Corollary \ref{prop-wa} to get
\begin{align*}
\|w^a-v_{k_0}\|_{\dot S^1([t,\infty))}&\le e^{-k_0 e_0 t}\le \frac
13 e^{-\frac{d+2}{d+1}e_0 t}.\\
\|v_{k_0}-ae^{-e_0 s}\mathcal Y\|_{\dot
S^1([t,\infty))}&\le\|\sum_{2\le j\le k_0}e^{-je_0 t}\Phi_j\|_{\dot
S^1([t,\infty))}\\
&\lsm e^{-2e_0 t}\le \frac 13e^{-\frac{d+2}{d+1}e_0 t}.
\end{align*}
Collecting these estimates together we obtain
\begin{align*}
\|u-W^a\|_{\dot S^1([t,\infty))}\le e^{-\frac{d+2}{d+1}e_0 t}.
\end{align*}

Now suppose \eqref{dudu0} hold for some $m_1\ge \frac{d+2}{d+1}e_0$,
we show \eqref{dudu0} holds for $m=m_1+\frac 1 {d+1} e_0$. This
will yield \eqref{dudu0} by iteration.

Write $h=u-W^a$, then $h$ satisfies
\begin{align*}
\partial_{tt}h+\mathcal L h=R(h+w^a)-R(w^a).
\end{align*}
Since by Lemma \ref{gain-decay}, we have
\begin{align*}
\|R(h+w^a)(t)-R(w^a)(t)\|_{L_x^{\frac{2d}{d+2}}}&\lsm \|h(t)\|_{\dot
H_x^1}^{p_c}+\|h(t)\|_{\dot H_x^1} e^{-(p_c-1)e_0 t}\\
&\lsm e^{-(m_1+(p_c-1)e_0)t}\\
 \|R(h+w^a)-R(w^a)\|_{\dot
N^1([t,\infty))}& \lsm \|h\|_{\dot
S^1([t,\infty))}^{p_c}+e^{-(p_c-1)e_0
t}\|h\|_{\dot S^1([t,\infty))} + \| h\|^{\frac{d+3}{d+1}}_{\dot S^1([t,\infty))} \\
&\lsm e^{-(m_1+\frac 2{d+1} e_0 )t},
\end{align*}
Applying Lemma \ref{upgrade} again gives us
\begin{align*}
\|h\|_{\dot S^1([t,\infty))}\le e^{-(m_1+\frac 1{d+1}e_0)t}.
\end{align*}

\texttt{Step 3}. Now we show there exists $m>0$ such that $h(t)=0$
for all $t\ge t_m$ by using decay estimate \eqref{dudu0}.

First we note $h$ satisfies the equation
\begin{align*}
\partial_{tt}h-\Delta h =p_c W^{p_c-1} h+R(w^a+h)-R(w^a),
\end{align*}
and the following Duhamel's formula:
\begin{align*}
h(t)=-\int_t^{\infty}\frac{\sin((t-s)\sqrt{-\Delta})}{\sqrt{-\Delta}}
(p_cW^{p_c-1}h+R(W^a+h)-R(W^a))(s)ds .
\end{align*}
Applying Strichartz estimate on $[t,\infty)$, we obtain
\begin{align*}
\|h\|_{\dot S^1([t,\infty))}\le C(
\|W^{p_c-1}h\|_{L_s^1L_x^2([t,\infty)\times\R^d)}+\|R(h+w^a)-R(w^a)\|_{\dot
N^1([t,\infty))}).
\end{align*}
Denote $\|h\|_{\Sigma_t}:=\sup_{s\ge t}e^{ms}\|h\|_{\dot
S^1([s,\infty))}$. Then for $\eta>0$ small enough we have
\begin{align*}
\|W^{p_c-1}h\|_{L_s^1L_x^2([t,\infty)\times\R^d)}&\lsm \sum_{j\ge
0}\|W^{p_c-1}h\|_{L_s^1L_x^2([t+\eta j,t+\eta(j+1)]\times\R^d)}\\
&\le \eta\|W^{p_c-1}\|_{L_x^{
d}}\|h\|_{L_s^{\infty}L_x^{\frac{2d}{d-2}}([t+\eta
j,t+\eta(j+1)]\times\R^d)}\\
&\lsm \sum_{j\ge 0} \eta\|h\|_{\dot S^1([t+\eta j,t+(j+1)\eta])}\\
&\lsm \sum_{j\ge 0} \eta e^{-m(t+\eta j)}\|h\|_{\Sigma_{t_m}}\\
&\lsm e^{-mt}\|h\|_{\Sigma_{t_m}}\frac{\eta}{1-e^{-\eta m}}\\
&\lsm \frac 2 m e^{-mt}\|h\|_{\Sigma_{t_m}}.
\end{align*}
From Lemma \ref{gain-decay}, we get
\begin{align*}
\|R(w^a+h)-R(w^a)\|_{\dot N^1([t,\infty))}\le \frac
1{10C}e^{-mt}\|h\|_{\Sigma_{t_m}}.
\end{align*}
Combining these two estimates, we get
$$
\|h\|_{\Sigma_{t_m}}\le \frac 12\|h\|_{\Sigma_{t_m}},
$$
which implies that $h(t)=0$ on $[t_m,\infty)$. Recall that
$h(t)=u(t)-W^a(t)$ we obtain $u(t)=W^a(t)$ on  $[t_m,\infty)$.
Therefore $u\equiv W^a$ by uniqueness of solutions to \eqref{nlw}.
The Proposition is proved and we have Theorem \ref{u-is-wa2}.
\end{proof}

\begin{proof} [Proof of Corollary \ref{cor-ab}]
The proof is almost the same as
Corollary 6.5 in \cite{duck-merle:wave}. Let $a\neq0$ and $T_a$ be
such that $|a|e^{-e_0 T_a}=1$. By \eqref{extra-fir} we have
\begin{equation}\label{change}
\|W^a(t+T_a)-W\mp e^{-e_0t}\mathcal Y\|_{H^{m,m}}\lesssim
e^{-\frac 32 e_0t}.
\end{equation}
Moreover $W^a(\cdot+T_a)$ satisfies the assumption in Theorem
\ref{u-is-wa2}, thus there exists $a'$ such that $W^a(\cdot
+T_a)=W^{a'}$. By \eqref{change}, $a'=1$ if $a>0$ and $a'=-1$ if
$a<0$. Corollary \ref{cor-ab} is proved.

\end{proof}

Finally, we give the proof of the main theorem \ref{u is w}.

\vspace{0.3cm}

\emph{Proof of Theorem \ref{u is w}:} We first note that (b) is just
the variational characterization of $W$. More precisely we have
\begin{thm}\label{w-like}\cite{aubin,talenti}
Let $c(d)$ denote the sharp constant in Sobolev embedding
$$
\|f\|_{\frac{2d}{d-2}}\le c(d)\|\nabla f\|_2.
$$
Then the equality holds iff $f$ is $W$ up to symmetries. More
precisely, there exists $\lambda_0>0$, $x_0 \in \R^d$, $\delta_0 \in \{-1,+1\}$,
such that
$$
f(x)=\delta_0 \lambda_0^{-\frac{d-2}2}W(\frac
{x-x_0}{\lambda_0}).
$$
In particular, if
 $(u_0, u_1)\in \dot H_x^1\times L_x^2$ satisfies
$$
E(u_0, u_1)=E(W,0),\ \|\nabla u_0\|_2=\|\nabla W\|_2,
$$
then $(u_0,u_1)=(W,0)$ up to symmetries, hence the corresponding
solution $u$ coincides with $W$ up to symmetries.
\end{thm}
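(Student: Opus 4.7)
The first half of the statement is the classical Aubin--Talenti theorem, which we would simply invoke from \cite{aubin,talenti} as cited. The real work is deducing the rigidity statement for $(u_0,u_1)$ from the sharp Sobolev inequality together with the energy/kinetic constraint.

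My plan is to run the standard ``equality in Sobolev'' argument at the level of the energy functional. First I would record the Pohozaev--type identity for $W$: since $\Delta W + W^{p_c}=0$, multiplying by $W$ and integrating by parts gives $\|\nabla W\|_2^2 = \|W\|_{2d/(d-2)}^{2d/(d-2)}$. Plugged into \eqref{energy} this yields the clean identity $E(W,0)=\frac{1}{d}\|\nabla W\|_2^2$. Combined with the sharp Sobolev inequality evaluated on $W$, one also sees that $c(d)$ is characterized by $c(d)^{2d/(d-2)}\|\nabla W\|_2^{4/(d-2)}=1$, equivalently $\|W\|_{2d/(d-2)}^{2d/(d-2)} = c(d)^{2d/(d-2)}\|\nabla W\|_2^{2d/(d-2)}$.

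Next, using the hypothesis $\|\nabla u_0\|_2=\|\nabla W\|_2$ together with the sharp Sobolev inequality applied to $u_0$, I would deduce
\begin{equation*}
\|u_0\|_{2d/(d-2)}^{2d/(d-2)} \le c(d)^{2d/(d-2)}\|\nabla u_0\|_2^{2d/(d-2)} = \|W\|_{2d/(d-2)}^{2d/(d-2)} = \|\nabla W\|_2^2.
\end{equation*}
Substituting into the energy and using $E(u_0,u_1)=E(W,0)$, one obtains
\begin{equation*}
E(W,0) = E(u_0,u_1) \ge \tfrac12 \|u_1\|_2^2 + \tfrac12 \|\nabla W\|_2^2 - \tfrac{d-2}{2d}\|\nabla W\|_2^2 = \tfrac12\|u_1\|_2^2 + E(W,0).
\end{equation*}
Hence $u_1\equiv 0$ and, crucially, equality must hold in the Sobolev inequality for $u_0$.

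By the already-invoked Aubin--Talenti characterization, equality forces $u_0(x)=\delta_0 \lambda_0^{-(d-2)/2}W((x-x_0)/\lambda_0)$ for some $\lambda_0>0$, $x_0\in\R^d$, $\delta_0\in\{\pm 1\}$. Thus $(u_0,u_1)$ coincides with $(W,0)$ modulo the symmetries of \eqref{nlw}, and by uniqueness of solutions to \eqref{nlw} with this initial data, $u$ itself must agree with the static solution $W$ up to the same symmetries. The only potentially nontrivial step is the Pohozaev identity at the start, but this is immediate from the explicit form of $W$ and decay; there is no genuine obstacle here since part (i) of the theorem is quoted.
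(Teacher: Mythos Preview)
Your argument is correct. The paper itself does not supply a proof of Theorem~\ref{w-like}: it is stated with the citation \cite{aubin,talenti} and then immediately used, so there is nothing in the paper to compare against at the level of proof details. Your derivation of the ``in particular'' clause from the Aubin--Talenti characterization via the Pohozaev identity $\|\nabla W\|_2^2=\|W\|_{2d/(d-2)}^{2d/(d-2)}$ and the energy inequality is the standard route and is exactly what the paper implicitly takes for granted when it says ``(b) is just the variational characterization of $W$.''
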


It remains for us to show (a), (c). We first prove (a). Let $u$ be
the maximal-lifespan solution of \eqref{nlw} on $I$ satisfying
$E(u_0, u_1)=E(W,0)$, $\|\nabla u_0\|_2<\|\nabla W\|_2$. Then by
Proposition \ref{prop:exp}, we have $I=\R$.  Assume that $u$ blows
up forward in time. Applying Proposition \ref{prop:exp} again, we
conclude that there exist $x_0 \in \R^d$, $\mu_0$, $\gamma_0$, $C>0$ such that
\begin{align*}
\|\nabla(u(t)-W_{[\mu_0,x_0]})\|_{2}+\|\partial_t u(t)\|_2\le
Ce^{-\gamma_0 t}.
\end{align*}
where $W_{[\mu_0,x_0]}=\mu_0^{-\frac{d-2}2}W(\frac{x+x_0}{\mu_0})$.
This implies
\begin{align*}
\|\nabla(u_{[\mu_0^{-1},-\mu_0^{-1} x_0]}(t)-W)\|_{2}+\|\partial_t(u_{[\mu_0^{-1},- \mu_0^{-1} x_0]}(t))\|_2\le
e^{-\gamma_0\mu_0 t}
\end{align*}
where
$$
u_{[\mu_0^{-1},-\mu_0^{-1} x_0]}(t,x)=\mu_0^{\frac{d-2}2}u(\mu_0 t,\mu_0
x-x_0)
$$
is also a solution of the equation \eqref{nlw}. By Theorem
\ref{u-is-wa2} with $\gamma_0$ now replaced by $\gamma_0\mu_0$, we
conclude there exists $a<0$ such that $u_{[\mu_0^{-1},- \mu_0^{-1} x_0]}=W^a$.

Using Corollary \ref{cor-ab}, we get
$$
u(t,x)=\mu_0^{-\frac{d-2}2}W^-(\mu_0^{-1}t+T_a,\mu_0^{-1}(x+x_0)).
$$
This shows that $u=W^-$ up to symmetries. The proof of (c) is
similar so we omit it. This ends the proof of Theorem \ref{u is w}.

\end{document}